\documentclass[10pt]{amsart}
\usepackage{a4, amsmath, bm, amssymb}
\usepackage{setspace}
\usepackage{subfiles}
\usepackage{cite}
\usepackage{enumerate}
\usepackage{url}

\usepackage{a4,amsmath,amssymb,amscd,verbatim,bbm,graphicx,enumerate,tikz}

\definecolor{crimson}{rgb}{0.86, 0.08, 0.24}
\definecolor{bleudefrance}{rgb}{0.19, 0.5, 0.91}
\usepackage[pdftex,colorlinks,linkcolor=bleudefrance,citecolor=crimson,filecolor=black]{hyperref}

\usepackage{faktor}
\usepackage{dsfont}
\usepackage{amsthm}

\makeatletter
\newtheorem*{rep@theorem}{\rep@title}
\newcommand{\newreptheorem}[2]{%
\newenvironment{rep#1}[1]{%
 \def\rep@title{#2 \ref{##1}}%
 \begin{rep@theorem}}%
 {\end{rep@theorem}}}
\makeatother

\newtheorem{theorem}{Theorem}
\newtheorem{lemma}[theorem]{Lemma}

\newtheorem{proposition}[theorem]{Proposition}
\newtheorem{corollary}[theorem]{Corollary}

\theoremstyle{definition}

\newtheorem{remark}[theorem]{Remark}

\numberwithin{equation}{section}
\numberwithin{theorem}{section}

\def\N{\mathbb{N}}

\def\Z{\mathbb{Z}}
\def\R{\mathbb{R}}

\begin{document}
  
\normalsize

\title[A non-symmetric Kesten criterion]{A non-symmetric Kesten criterion and ratio
limit theorem for random walks on amenable groups}

\author{Rhiannon \textsc{Dougall} and Richard \textsc{Sharp}}

\address{Department of Mathematical Sciences,
Durham University,
Upper Mountjoy,
Durham DH1 3LE}
\email{rhiannon.dougall@durham.ac.uk}

\address{Mathematics Institute, University of Warwick,
Coventry CV4 7AL, UK}
\email{R.J.Sharp@warwick.ac.uk}

\thanks{\copyright 2023. This work is licensed by a CC BY license.}

\begin{abstract}
We consider random walks on countable groups.
A celebrated result of Kesten says that the spectral radius of a symmetric walk
(whose support generates the group as a semigroup) is equal to one if and only if the group is
amenable. We give an analogue of this result for finitely supported walks
which are not symmetric. 
We also conclude a ratio 
limit theorem for amenable groups.
\end{abstract}

\maketitle

\section{Introduction}

Let $G$ be a finitely generated countable group. Let $\mu$ be a 
probability measure on $G$, i.e. a function $\mu : G \to \mathbb R^+$ 
such that $\sum_{g \in G} \mu(g) =1$.
Let $
S_\mu :=\{g \in G \hbox{ : } \mu(g)>0\}$, the support of $\mu$.
We say that $\mu$ is \emph{non-degenerate} if $S_\mu$ generates $G$ as a semigroup.
(We do not require $S_\mu$ to be finite.)

Let $|\cdot|$ be a word metric on $G$ associated to some finite generating set. 
(We do not assume any connection between this set and $S_\mu$.) We say that $\mu$ has finite first moment if 
\[
\sum_{g \in G} |g| \mu(g) <\infty
\]
and that $\mu$ has finite exponential moment of order $c>0$ if
\[
\sum_{g \in G} e^{c|g|} \mu(g) <\infty.
\]

The measure $\mu$ defines a random walk on $G$ 
with transition probabilities
$
p(s,t)
= \mu(s^{-1}t)$.
The convolution $\mu \ast \nu$ of two functions $\mu,\nu : G \to \mathbb R^+$ is defined 
by
\[
\mu \ast \nu(g) = \sum_{s \in G} \mu(s) \nu(s^{-1}g).
\]
We will be interested in the {\it spectral radius} of this random walk, defined  by
$$
\lambda(G,\mu) :=\limsup_{n\to\infty}(\mu^{\ast n}(e))^{1/n}.
$$

Clearly, $0 \le \lambda(G,\mu) \le 1$.
A celebrated theorem of Kesten (which does not even require $G$ to be
finitely generated) says that if $\mu$ is symmetric then
$\lambda(G,\mu)=1$ if and only if $G$ is amenable \cite{Kesten}. 
(We recall that $G$ is amenable if and only if it admits a Banach mean, i.e.
a linear functional $M:\ell^\infty(G)\to\mathbb{R}$ such that $M(\mathds{1})=1$, $\inf_{g\in G} f(g) \le M(f)\le \sup_{g\in G} f(g)$, and $M(f_g)=M(f)$,
where $f_g(x)=f(gx)$. See the papers of F{\o}lner \cite{folner} and Day \cite{Day} for further discussion.)
The aim of this note is to generalise Kesten's criterion to the
non-symmetric case.

To state our generalisation, we need to consider the abelianisation of $G$.
Since $G$ is finitely generated, this has a finite rank $k \ge 0$.
Let $G^{\mathrm{ab}} = G/[G,G]$ denote the
abelianisation of $G$ and let $G^{\mathrm{ab}}_{\mathrm{T}}$ denote the torsion subgroup
of $G^{\mathrm{ab}}$. Now set $\overline{G} = G^{\mathrm{ab}}/G^{\mathrm{ab}}_{\mathrm{T}}
\cong \mathbb Z^k$, for some $k \ge 0$,
(the torsion-free part of the abelianisation) and
let $\pi : G \to \overline G$ be the natural 
projection homomorphism.
Write $\bar{\mu} =\pi_*(\mu)$, i.e.
\[
 \bar{\mu}(m)=\sum_{\substack{g\in G \\ \pi(g)=m }}\mu(g).
 \]

\begin{theorem}[Non-symmetric Kesten criterion]\label{thm:kesten}
Let $G$ be a finitely generated group and let $\mu$ be a non-degenerate
probability measure on $G$.
Then
$$
G \; \mathrm{amenable} \; \iff \lambda(G,\mu) = \lambda(\overline{G},\bar{\mu}).
$$
\end{theorem}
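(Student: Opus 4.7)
The plan is to use an exponential change of measure to reduce to a centered random walk, and then prove a Kesten-type dichotomy in that centered case. Before that, note the trivial bound $\lambda(G,\mu)\le\lambda(\overline{G},\bar{\mu})$, which holds unconditionally: since $e\in\ker\pi$, one has $\mu^{*n}(e)\le\sum_{g\in\ker\pi}\mu^{*n}(g)=\bar{\mu}^{*n}(0)$. The content of the theorem is to characterise when equality holds.

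For the exponential tilt, reduce (by truncation if necessary) to $\mu$ with finite exponential moments of all orders. The function $\phi(t):=\sum_{g}e^{t\cdot\pi(g)}\mu(g)$ is convex on $\mathbb{R}^k$ and, under mild non-degeneracy, attains its minimum at some $t^{*}$. Setting $\mu_*(g):=e^{t^{*}\cdot\pi(g)}\mu(g)/\phi(t^{*})$, the multiplicativity of the character $g\mapsto e^{t^{*}\cdot\pi(g)}$ gives
\[
\mu_*^{*n}(g)=e^{t^{*}\cdot\pi(g)}\mu^{*n}(g)/\phi(t^{*})^n,
\]
so $\lambda(G,\mu_*)=\lambda(G,\mu)/\phi(t^{*})$. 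The same identity on $\overline{G}$, combined with the fact that $\bar{\mu}_*$ has zero mean (being a critical point of $\log\phi$), lets the local central limit theorem give $\bar{\mu}_*^{*n}(0)\asymp n^{-k/2}$, so $\lambda(\overline{G},\bar{\mu}_*)=1$ and $\lambda(\overline{G},\bar{\mu})=\phi(t^{*})$. The theorem reduces to showing $\lambda(G,\mu_*)=1$ iff $G$ is amenable.

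For the non-amenable direction one argues that $\lambda(G,\mu_*)\le\|P_{\mu_*}\|_{\ell^2(G)}<1$ via symmetrisation: since $\|P_{\mu_*}\|^2=\lambda(G,\mu_*\ast\check{\mu}_*)$ and $\mu_*\ast\check{\mu}_*$ is symmetric, Kesten's classical theorem — applied on the subgroup generated by $\mathrm{supp}(\mu_*\ast\check{\mu}_*)$, after decomposing $\ell^2(G)$ over its cosets — gives the strict inequality whenever $G$ is non-amenable.

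The amenable direction is the main obstacle. Writing $N:=\ker\pi$, amenability of $G$ is equivalent to amenability of $N$ (since $\overline{G}\cong\mathbb{Z}^k$ is abelian and amenability is closed under extensions). The strategy is to combine a F{\o}lner exhaustion of $N$ with the local CLT on $\overline{G}$ via the fibration
\[
\mu_*^{*n}(e)=\bar{\mu}_*^{*n}(0)\cdot\mathbf{P}\bigl[X_n=e\,\big|\,\pi(X_n)=0\bigr],
\]
where $X_n$ is the $\mu_*$-walk. Since the first factor decays only polynomially, it suffices to show the fibred conditional return is sub-exponential. I would construct approximate reproducing vectors in $\ell^2(G)$ — tensor products of normalised characteristic functions of F{\o}lner sets of $N$ with slowly-varying weights on $\overline{G}$ chosen to damp the oscillation in the abelianisation — and use them, together with the zero-drift local CLT upstairs, to bound $\mu_*^{*n}(e)$ from below at a sub-exponential rate. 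Making this hybrid F{\o}lner/local-CLT argument precise, and in particular transferring amenability of $N$ into an honest fibred lower bound on the return probability, is where the bulk of the technical work will lie.
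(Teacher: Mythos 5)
Your preliminary steps are essentially right and agree with the paper where they overlap: the unconditional bound $\lambda(G,\mu)\le\lambda(\overline G,\bar\mu)$, the exponential tilt $\mu_*$, the identity $\lambda(\overline G,\bar\mu)=\phi(t^*)$, and the non-amenable direction via symmetrisation and classical Kesten (the paper just quotes Day's theorem for this, which is proved exactly by your $\|P_{\mu_*}\|^2=\|P_{\mu_*\ast\check\mu_*}\|$ argument). Two caveats there: the identity $\lambda(\overline G,\bar\mu)=\phi(t^*)$ should come from Stone's convexity lemma (as in the paper's Corollary \ref{cor:xi_measure}), not from a local CLT, which needs moment hypotheses the theorem does not assume; and ``reduce by truncation'' is not a legitimate reduction, since truncating $\mu$ changes both $\lambda(G,\mu)$ and $\lambda(\overline G,\bar\mu)$ with no control on how the equality passes to the limit.

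The genuine gap is the amenable direction, which is the entire content of the theorem, and your proposal for it is a strategy sketch that you yourself flag as incomplete. Worse, the strategy as described runs into a structural obstruction: tensoring F{\o}lner indicators of $N=\ker\pi$ with weights on $\overline G$ produces almost-invariant vectors $f\in\ell^2(G)$, which control $\langle P_{\mu_*}^nf,f\rangle$ and hence $\|P_{\mu_*}\|$ --- but for a \emph{non-symmetric} kernel the operator norm does not control the diagonal coefficient $\mu_*^{*n}(e)$ (on $\mathbb Z$ with $\mu=\delta_1$ the norm is $1$ while every return probability is $0$), and the identity $\mu^{*2n}(e)=\|P^n\delta_e\|^2\ge|\langle P^{2n}f,f\rangle|$ that makes the F{\o}lner argument work in Kesten's symmetric setting is simply unavailable. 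Nothing in your outline converts amenability of $N$ into a lower bound on the fibred conditional return probability. The paper's route is different and avoids $\ell^2$ entirely: it proves the reverse inequality $\lambda(\overline G,\bar\mu)\le\lambda(G,\mu)$ by a Patterson--Sullivan construction, building a positive function $H$ with $\sum_s\mu^{*r}(s)H(s^{-1}g)=\lambda(G,\mu)^rH(g)$ as a limit of ratios of slowly-corrected Green-type series, then applying a Banach mean $M$ and Jensen's inequality to the bounded functions $g\mapsto\log(H(\gamma g)/H(g))$ to extract a genuine homomorphism $h:G\to\mathbb R^{>0}$ with $\sum_g\mu^{*n}(g)h(g^{-1})\le\lambda(G,\mu)^n$; discarding the terms with $\pi(g)\ne0$ yields $\bar\mu^{*n}(0)\le\lambda(G,\mu)^n$ and hence the claim. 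If you want to keep your fibred picture, you must replace the F{\o}lner/$\ell^2$ step by a device that sees return probabilities of non-symmetric kernels --- which is precisely what the harmonic-function-plus-invariant-mean argument provides.
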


The special case where $\mu$ has finite support originally appeared in Dougall--Sharp \cite{DougallSharp},
where it is written in the language of subshifts of finite type and Gibbs measures. 

The value of $\lambda(\overline{G},\bar{\mu})$ may be characterised in the following way.
Define $\phi_\mu : \R^k \to \R^+ \cup \{+\infty\}$ by
\[
\phi_\mu(v) = \sum_{g \in G} e^{\langle v,\pi(g)\rangle} \mu(g) 
=
\sum_{m \in \Z^k}  e^{\langle v,m\rangle} \bar{\mu}(m),
\]
where $\langle v,m\rangle = v_1m_1 + \cdots + v_km_k$.
By a result of Stone \cite{Stone},\cite{Stone_Berkeley}, there is a unique $\xi \in \R^k$
at which $\phi_\mu(v)$ attains its minimum. Then $\lambda(\overline{G},\bar{\mu}) = \phi_\mu(\xi)$.
This is discussed in more detail in Section \ref{sec:limit_theorems}.

A probability measure $\mu$ (with finite first moment) is said to be \emph{centred} if for each homomorphism 
 $\chi : G \to \R$, we have
 \[
 \sum_{g \in G}  \chi(g) \mu(g) =0.
 \]
 Any such homomorphism factors through $\overline{G}$ so it is easy to see that
$\mu$ is centred if and only if either $k=0$ or
\[\sum_{g \in G} \pi(g) \mu(x) = \sum_{m \in \bar G} m\bar \mu(m)=0.
\]
In particular, $\mu$ is centred if and only if $\bar{\mu}$ is centred.

If, in addition, $\mu$ has a finite exponential moment of some order then we have the following result.

\begin{corollary}\label{cor:centred}
Let $G$ be a finitely group and let $\mu$ be a non-degenerate
probability measure on $G$. 
Provided $\mu$ has a finite exponential moment of some order,
we have $\lambda(G,\mu)=1$ if and only if $G$ is amenable and $\mu$ is centred.
\end{corollary}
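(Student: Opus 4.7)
The plan is to combine Theorem~\ref{thm:kesten} with a direct convexity analysis of the minimisation problem defining $\lambda(\overline{G},\bar{\mu})$. The key preliminary observation is the projection identity $\bar{\mu}^{\ast n}(0) = \sum_{\pi(g)=0}\mu^{\ast n}(g) \ge \mu^{\ast n}(e)$, which yields $\lambda(G,\mu) \le \lambda(\overline{G},\bar{\mu}) \le 1$. Combined with Theorem~\ref{thm:kesten}, this shows that $\lambda(G,\mu)=1$ is equivalent to $G$ being amenable together with $\lambda(\overline{G},\bar{\mu})=1$. It therefore suffices to prove that, under the finite exponential moment assumption, $\lambda(\overline{G},\bar{\mu})=1$ if and only if $\mu$ is centred.

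For this, recall that $\lambda(\overline{G},\bar{\mu})=\phi_\mu(\xi)$ for the unique minimiser $\xi$ of the convex function $\phi_\mu$, and observe that $\phi_\mu(0)=\sum_g \mu(g)=1$. Hence $\lambda(\overline{G},\bar{\mu})\le 1$, with equality if and only if $\xi=0$. To characterise this, I would invoke the exponential moment assumption to deduce that $\phi_\mu$ is finite on a neighbourhood of $0$: using $|\langle v,\pi(g)\rangle|\le C|v|\cdot|g|$ for an appropriate constant $C$, any $v$ with $|v|<c/C$ satisfies $e^{\langle v,\pi(g)\rangle}\mu(g)\le e^{c|g|}\mu(g)$. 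Dominated convergence then yields that $\phi_\mu$ is differentiable at $0$ with
\[
\nabla\phi_\mu(0) = \sum_{g\in G} \pi(g)\mu(g).
\]
By convexity of $\phi_\mu$ and uniqueness of the minimiser, $\xi=0$ if and only if $\nabla\phi_\mu(0)=0$, which is precisely the centring condition.

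Putting the two reductions together gives the corollary. The only real technical point is the differentiability of $\phi_\mu$ at $0$ under the exponential moment hypothesis, which is a routine application of dominated convergence once one uses that $|\pi(g)|$ grows at most linearly in the word length $|g|$; this is also where the moment assumption is genuinely needed, since without it the centring condition need not even be well defined.
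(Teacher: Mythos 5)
Your proposal is correct and follows essentially the same route as the paper: reduce via Theorem~\ref{thm:kesten} (together with the elementary inequality $\lambda(G,\mu)\le\lambda(\overline{G},\bar\mu)\le 1$, which the paper leaves implicit) to the statement that $\lambda(\overline{G},\bar\mu)=1$ iff $\mu$ is centred, and then use the exponential moment to get finiteness and differentiability of the convex function $\phi_\mu$ near $0$, so that the minimiser $\xi$ equals $0$ exactly when $\nabla\phi_\mu(0)=\sum_g\pi(g)\mu(g)=0$. The only cosmetic difference is that the paper invokes strict convexity of $\phi_\mu$ where you use uniqueness of the minimiser from Stone's lemma; both suffice.
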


Theorem \ref{thm:kesten} allows us to prove a ratio limit theorem for amenable groups with an
explicit limit.
To avoid any parity issues, it is convenient to restrict to aperiodic walks. We say that 
$(G,\mu)$ is {\it aperiodic}
if there exists $n_0 \ge 1$ such that $\mu^{\ast n}(e) >0$ for all $n \ge n_0$.

\begin{theorem}[Ratio limit theorem]\label{thm:ratio}
Suppose that $G$ is a finitely generated amenable and that $\mu$ is a 
non-degenerate probability measure on $G$.  Assume in addition that $(G,\mu)$ is aperiodic.
 Then, for each $g\in G$,
$$
\lim_{n \to \infty} \frac{\mu^{\ast n}(g)}{\mu^{\ast n}(e)} = e^{-\langle \xi,\pi(g)\rangle},
$$
where $\xi \in \R^k$ is the unique value for which $\lambda(G,\mu)=\phi_\mu(\xi)$.
\end{theorem}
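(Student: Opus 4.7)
The strategy is to apply a Doob-type transform to reduce the theorem to a ratio limit statement for a centred walk of spectral radius one on the amenable group $G$.

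Define the multiplicative homomorphism $h:G\to\R^+$ by $h(g):=e^{\langle\xi,\pi(g)\rangle}$ and the twisted probability measure
\[
\tilde\mu(g):=\frac{h(g)\mu(g)}{\phi_\mu(\xi)}.
\]
Since $\sum_g h(g)\mu(g)=\phi_\mu(\xi)<\infty$, this is indeed a probability measure on $G$, with the same support as $\mu$, hence non-degenerate and aperiodic. A short induction using that $h$ is a homomorphism gives
\[
\tilde\mu^{\ast n}(g)=\frac{h(g)\,\mu^{\ast n}(g)}{\phi_\mu(\xi)^n},
\]
so that
\[
\frac{\mu^{\ast n}(g)}{\mu^{\ast n}(e)}=e^{-\langle\xi,\pi(g)\rangle}\cdot\frac{\tilde\mu^{\ast n}(g)}{\tilde\mu^{\ast n}(e)}.
\]
Consequently it is enough to show $\tilde\mu^{\ast n}(g)/\tilde\mu^{\ast n}(e)\to 1$ for every $g\in G$.

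Next, observe that $\phi_{\tilde\mu}(v)=\phi_\mu(v+\xi)/\phi_\mu(\xi)$, so $\phi_{\tilde\mu}$ is minimised at $v=0$ with value $1$. Therefore $\lambda(\overline{G},\bar{\tilde\mu})=1$, and the first-order condition $\nabla\phi_\mu(\xi)=0$ translates into $\sum_m m\,\bar{\tilde\mu}(m)=0$, i.e.\ $\bar{\tilde\mu}$ is centred on $\Z^k$. Since $G$ is amenable, Theorem~\ref{thm:kesten} applied to the pair $(G,\tilde\mu)$ then yields $\lambda(G,\tilde\mu)=1$.

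The problem is thus reduced to the following statement: if $\nu$ is a non-degenerate, aperiodic, centred probability measure on an amenable group $G$ with $\lambda(G,\nu)=1$, then $\nu^{\ast n}(g)/\nu^{\ast n}(e)\to 1$ for every $g$. This is the main obstacle and is not a formal consequence of Theorem~\ref{thm:kesten}; rather, it is a genuinely analytic input. A natural plan is a two-tier argument: first establish the ratio limit for $\bar\nu$ on $\Z^k$ via the local central limit theorem for centred lattice walks; then lift to $G$ by exploiting amenability of the kernel $K=\ker\pi$, using F{\o}lner sequences in $K$ to show that $\nu^{\ast n}$ becomes asymptotically invariant under $K$-translation on the relevant spatial scales. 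Combining this $K$-invariance with the $\Z^k$ local limit theorem gives convergence of the pointwise ratios on $G$ to $1$, which by the Doob-transform reduction above is equivalent to the theorem's claim.
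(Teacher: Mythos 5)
Your reduction is sound and coincides with the paper's set-up: the twisted measure $\tilde\mu$ is exactly the paper's $\mu_\xi$, the convolution identity $\tilde\mu^{\ast n}(g)=h(g)\mu^{\ast n}(g)/\phi_\mu(\xi)^n$ is correct because $h$ is a homomorphism, and the identification $\lambda(G,\mu_\xi)=1$ does follow from Theorem \ref{thm:kesten} together with Corollary \ref{cor:xi_measure}. You are also right that the theorem is thereby equivalent to showing $\mu_\xi^{\ast n}(g)/\mu_\xi^{\ast n}(e)\to 1$ for all $g$, and right to flag that this residual statement is the genuine analytic content.

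The problem is that your proposal stops exactly where the proof has to begin. The ``two-tier plan'' --- local limit theorem for $\bar{\tilde\mu}$ on $\Z^k$, then ``lift to $G$ by exploiting amenability of $K=\ker\pi$, using F{\o}lner sequences in $K$ to show that $\tilde\mu^{\ast n}$ becomes asymptotically invariant under $K$-translation'' --- is not an argument but a restatement of the claim: asymptotic $K$-invariance of $\tilde\mu^{\ast n}$ \emph{is} the assertion $\tilde\mu^{\ast n}(kg)/\tilde\mu^{\ast n}(g)\to1$ for $k\in K$, which is the theorem restricted to the kernel. There is no mechanism by which a F{\o}lner sequence in $K$ (which need not even be finitely generated) acts on the convolution powers to produce this invariance; even in the symmetric case this is Avez's theorem, whose proof is nontrivial, and in the non-symmetric case it is precisely what must be supplied. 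Two smaller issues in the same part of the argument: the first-order condition $\nabla\phi_\mu(\xi)=0$ (hence centredness of $\bar{\tilde\mu}$) requires $\phi_\mu$ to be finite and differentiable near $\xi$, which is not guaranteed without a moment hypothesis, and a classical local CLT on $\Z^k$ needs finite second moments --- one should instead invoke Stone's ratio limit theorem (Proposition \ref{Prop:Stone}), which holds for any non-degenerate aperiodic walk on $\Z^k$. The paper's route is different at the crucial step: it works on the full shift $S_\mu^{\mathbb N}$, forms the loop measures $m_n$ weighted by $\nu_\xi$, and uses the Gurevi\v{c} pressure and a large-deviations bound (Proposition \ref{prop:local_ld_bound}) to prove the equidistribution $m_n([u_1,\ldots,u_k])\to\nu_\xi([u_1,\ldots,u_k])$; since $\mu_\xi^{\ast n}(e)$ decays subexponentially (Lemma \ref{lemma:limit}, which is where amenability and Theorem \ref{thm:kesten} enter), the exponentially small exceptional set is negligible, and Lemma \ref{lem:explicit_natural} converts the equidistribution into $\mu_\xi^{\ast(n-k)}(u^{-1})/\mu_\xi^{\ast n}(e)\to1$. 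You would need to either reproduce an argument of this strength or give a genuine proof of the $K$-invariance step; as written, the proposal has a gap at its central point.
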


\begin{remark}
One should compare this with a theorem of Avez \cite{Avez} that says that if $G$ is amenable and 
$\mu$ is \emph{symmetric}, non-degenerate and aperiodic then
$\lim_{n \to \infty} \mu^{\ast n}(g)/\mu^{\ast n}(e) =1$, for all $g \in G$. 
\end{remark}

\begin{remark}
It should be noted that there is no {\it a priori} mechanism to guarantee that the ratios do indeed have a limit. 
However, notice that if one has the ratio limits 
\[
\lim_{n\to\infty}\frac{\mu^{\ast n}(g)}{\mu^{\ast n}(e)} = e^{-\langle \xi, \pi(g)\rangle},
\]
for all $g\in G$, for \emph{some} $\xi$
then $G$ is necessarily amenable. 
We give the short demonstration. From the hypothesis we have for any $s\in G$,
$$
\frac{\mu^{\ast n}(s^{-1})}{\mu^{\ast n}(e)} = e^{\langle \xi, \pi(s)\rangle}.
$$
Now, since
$$
\mu^{\ast(n+1)}(g) = \sum_{s\in G} \mu(s) \mu^{\ast n}(s^{-1} g),
$$
we then have
\begin{equation}\label{eq:limit_is_1}
\lim_{n\to\infty}\frac{\mu^{\ast (n+1)}(e)}{ \mu^{\ast n}(e)} =  \lim_{n \to \infty} \sum_{s\in S_\mu} \mu(s) 
 \frac{\mu^{\ast n}(s^{-1})}{\mu^{\ast n}(e)} \ge \sum_{s\in S_\mu} \mu(s) e^{\langle \xi, \pi(g)\rangle} = \phi_\mu(\xi).
\end{equation}
In particular $\phi_\mu(\xi)<\infty$. We proceed with the proof assuming that $\xi=0$ and deduce the general case after.

Now using that
$$
\frac{\mu^{\ast n}(e)}{\mu^{\ast 1}(e)} = \prod_{m=2}^n \frac{\mu^{\ast m}(e)}{\mu^{\ast (m-1)}(e)} 
$$ 
we see that (\ref{eq:limit_is_1}) with $\phi_\mu(0)=1$ implies that $\limsup_{n\to\infty}(\mu^{\ast n}(e))^{1/n}=1$. 
This contradicts Day's \cite{Day} generalisation of Kesten's criterion to the random walk operator spectral radius --- a consequence of which is that, for any non-degenerate probability, we have that if $G$ is non-amenable then $\limsup_{n\to\infty}(\mu^{\ast n}(e))^{1/n} <1$. 

For the general case $\xi\ne0$, we have already shown that $\phi_\mu(\xi)<\infty$, and so
$$
\hat{\mu}(g) = \frac{e^{\langle \xi,\pi(g) \rangle}}{\phi_\mu(\xi)} \mu(g)
$$
is a well-defined probability measure on $G$ with ratio limits equal to one, and we again conclude that $G$ is amenable.

\end{remark}

Let us now outline the contents of the rest of the paper.
In Section \ref{sec:limit_theorems}, we recall results of Stone on random walks on $\Z^k$
that are essential to the formulation of our results, and the rather general results of Gerl.
In Section \ref{sec:proof_of_centred}, we give a proof of Corollary \ref{cor:centred} assuming
Theorem \ref{thm:kesten}.
We prove Theorem \ref{thm:kesten}
in Sections \ref{section:toamenable} and \ref{section:amenableto}.
Theorem \ref{thm:ratio} is proved in Section \ref{section:equidistribution}, as a consequence of equidistribution results for countable state shifts.

\subsection*{Acknowledgements}
We are grateful to Wolfgang Woess for bringing to our attention the work of Gerl. We are grateful to R\'emi Coulon for asking whether Theorem \ref{thm:ratio} had a converse.

\section{Results of Stone and Gerl
}
\label{sec:limit_theorems}

In this section, we recall classic results of Stone concerning random walks on $\Z^k$. 
Let $\omega$ be a non-degenerate aperiodic probability measure on $\Z^k$
and define $\phi_\omega : \R^k \to \R \cup \{+\infty\}$ by
\[
\phi_\omega(v) 
= \sum_{m \in \Z^k} e^{\langle v,m \rangle} \omega(m).
\]

\begin{lemma}[Stone \cite{Stone}, \cite{Stone_Berkeley}]\label{lem:stone}
If $\omega$ is non-degenerate then there is a unique $\xi \in \R^k$ such that
$\phi_\omega(\xi) = \inf_{v \in \R^k} \phi_\omega(v)$.
Furthermore, $\lambda(\Z^k,\omega)=1$ if and only if $\phi_\omega(\xi) = 1$, i.e. if and only if $\xi=0$. 
\end{lemma}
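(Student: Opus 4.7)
My plan is to treat the two assertions separately: the existence and uniqueness of the minimizer $\xi$ rests on convex analysis, while the identification of $\lambda(\Z^k,\omega)$ with $\phi_\omega(\xi)$ will come via an exponential tilt that reduces matters to a centred walk, to which a local central limit theorem can be applied.

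For existence and uniqueness, I would first observe that $\phi_\omega$ is $C^\infty$ on the interior of its effective domain with Hessian $\sum_m (mm^{T})\, e^{\langle v,m\rangle}\omega(m)$. Non-degeneracy means $S_\omega$ generates $\Z^k$ as a semigroup, hence its real span is all of $\R^k$; so the Hessian is positive definite and $\phi_\omega$ is strictly convex. For coerciveness, note that given any unit vector $u\in\R^k$ one can pick some $\pm e_i$ with $\langle u,\pm e_i\rangle>0$; writing $\pm e_i$ as a finite sum of elements of $S_\omega$ (which is possible by semigroup generation) forces the existence of $m_u\in S_\omega$ with $\langle u,m_u\rangle>0$, and then $\phi_\omega(tu)\ge \omega(m_u)\, e^{t\langle u,m_u\rangle}\to\infty$ as $t\to\infty$. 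Strict convexity combined with coerciveness yields a unique minimizer $\xi$. Since $\phi_\omega(0)=1$ is automatic and $\xi$ is unique, this already gives $\phi_\omega(\xi)=1 \iff \xi=0$.

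To tie this to $\lambda(\Z^k,\omega)$, I plan to use the exponential tilt $\tilde{\omega}(m):=e^{\langle \xi,m\rangle}\omega(m)/\phi_\omega(\xi)$, which a straightforward computation shows is a probability measure with $\phi_{\tilde{\omega}}(v)=\phi_\omega(v+\xi)/\phi_\omega(\xi)$. By induction on $n$, the convolutions satisfy
\[
\tilde{\omega}^{\ast n}(m) = \frac{e^{\langle \xi,m\rangle}\,\omega^{\ast n}(m)}{\phi_\omega(\xi)^n},
\]
so evaluating at $m=0$ and taking $n$-th roots gives $\lambda(\Z^k,\tilde{\omega})=\lambda(\Z^k,\omega)/\phi_\omega(\xi)$. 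The point of the tilt is that the minimizer of $\phi_{\tilde{\omega}}$ is at $0$, i.e. $\nabla\phi_{\tilde{\omega}}(0)=\sum_m m\,\tilde{\omega}(m)=0$, so $\tilde{\omega}$ is centred; it also inherits non-degeneracy and aperiodicity from $\omega$.

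It remains to show $\lambda(\Z^k,\tilde{\omega})=1$, which will complete the proof by giving $\lambda(\Z^k,\omega)=\phi_\omega(\xi)$ and hence the equivalences $\lambda(\Z^k,\omega)=1 \iff \phi_\omega(\xi)=1 \iff \xi=0$. One direction is trivial since $\tilde\omega$ is a probability measure; the other direction is the main obstacle and is precisely the classical local central limit theorem for a non-degenerate, aperiodic, centred probability measure on $\Z^k$ with a finite exponential moment, which gives $\tilde{\omega}^{\ast n}(0)\asymp n^{-k/2}$ via Fourier inversion and a quadratic expansion of the characteristic function near the origin. The non-degeneracy of the covariance of $\tilde{\omega}$ needed for that expansion follows again from $S_\omega$ spanning $\R^k$, so the standard proof applies verbatim.
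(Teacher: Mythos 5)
The paper offers no proof of this lemma: it is imported verbatim from Stone \cite{Stone}, \cite{Stone_Berkeley}, and only the consequence $\lambda(\Z^k,\omega)=\phi_\omega(\xi)$ (Corollary \ref{cor:xi_measure}) is derived from it in the text. Measured on its own terms, your treatment of the first assertion is essentially sound: strict convexity of $\phi_\omega$ (which in fact holds on the whole effective domain without any differentiability, since for each direction $u$ some $m\in S_\omega$ has $\langle u,m\rangle\ne 0$) together with divergence along every ray gives a unique minimiser $\xi$, and $\phi_\omega(\xi)=1\iff\xi=0$ follows from $\phi_\omega(0)=1$ and uniqueness. The inequality $\lambda(\Z^k,\omega)\le\phi_\omega(\xi)$ is also elementary (your tilting identity at $m=0$ plus $\tilde{\omega}^{\ast n}(0)\le 1$, i.e.\ a Chernoff bound).

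The gap is in the reverse inequality. The lemma carries no moment hypothesis on $\omega$, so the minimiser $\xi$ need not lie in the interior of the effective domain $\{v:\phi_\omega(v)<\infty\}$, and when it sits on the boundary your two key claims --- that $\nabla\phi_{\tilde{\omega}}(0)=\sum_m m\,\tilde{\omega}(m)=0$, and that $\tilde{\omega}$ has a finite exponential moment so the classical local CLT ``applies verbatim'' --- both fail. Concretely, on $\Z$ take $\omega(-1)=q$ and $\omega(n)=c/n^4$ for $n\ge 1$, with $c$ small and $q=1-c\zeta(4)$. Then $\phi_\omega(v)=+\infty$ for $v>0$, while $\phi_\omega'(v)\le -q+c\zeta(3)<0$ for all $v\le 0$; hence $\xi=0$ is a boundary minimiser, $\tilde{\omega}=\omega$ has strictly negative mean (it is not centred) and no exponential moment on the right. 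The conclusion $\lambda(\Z,\omega)=1$ is still true, but the mechanism is a one-big-jump estimate --- $\omega^{\ast n}(0)\gtrsim \omega(\lceil\delta n\rceil)\,\omega^{\ast(n-1)}(-\lceil\delta n\rceil)$ decays only polynomially because a single jump of size about $\delta n$ compensates the drift at polynomial cost --- not a central limit theorem. Handling this heavy-tailed regime is precisely the content of Stone's papers. Your argument is complete only under an extra steepness assumption guaranteeing that $\xi$ is an interior critical point (e.g.\ finite support, or $\phi_\omega\to\infty$ at the boundary of its domain), which is not available in the generality needed for Theorem \ref{thm:kesten}, where $\mu$, and hence $\bar{\mu}$, carries no moment condition.
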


We note that $\phi_\omega(\xi)=1$ if and only if $\phi_\omega(v) \ge 1$ for all $v \in \R^k$.

\begin{corollary}\label{cor:xi_measure}
$\lambda(\Z^k,\omega) = \phi_\omega(\xi)$.
\end{corollary}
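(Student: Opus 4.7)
The plan is to reduce to the case $\xi = 0$ via an exponential tilt, and then invoke Lemma \ref{lem:stone} directly.

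Specifically, I would define the tilted measure
\[
\hat{\omega}(m) = \frac{e^{\langle \xi, m\rangle}}{\phi_\omega(\xi)} \omega(m),
\]
which is a probability measure on $\Z^k$ since the normalisation gives exactly $\phi_\omega(\xi)/\phi_\omega(\xi) = 1$. Note that $\hat\omega$ has the same support as $\omega$, hence inherits non-degeneracy and aperiodicity. A direct computation gives
\[
\phi_{\hat{\omega}}(v) = \sum_{m \in \Z^k} e^{\langle v,m\rangle} \hat{\omega}(m) = \frac{\phi_\omega(v+\xi)}{\phi_\omega(\xi)},
\]
so $\phi_{\hat{\omega}}$ attains its minimum at $v = 0$ with value $1$. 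By Lemma \ref{lem:stone}, $\lambda(\Z^k, \hat{\omega}) = 1$.

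Next I would relate the two spectral radii. An easy induction on $n$ shows that convolutions transform as
\[
\hat{\omega}^{\ast n}(m) = \frac{e^{\langle \xi, m\rangle}}{\phi_\omega(\xi)^n} \, \omega^{\ast n}(m),
\]
because the exponential factor $e^{\langle \xi, \cdot \rangle}$ is a group homomorphism from $\Z^k$ to $\R^+$, so it factors out of each convolution and the normalisation accumulates multiplicatively. Evaluating at $m = 0$ gives $\hat{\omega}^{\ast n}(0) = \omega^{\ast n}(0)/\phi_\omega(\xi)^n$, and taking $n$-th roots and $\limsup$ yields
\[
\lambda(\Z^k, \hat{\omega}) = \frac{\lambda(\Z^k, \omega)}{\phi_\omega(\xi)}.
\]
Combining this with $\lambda(\Z^k, \hat{\omega}) = 1$ gives the claimed identity $\lambda(\Z^k, \omega) = \phi_\omega(\xi)$.

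There is no real obstacle here: the argument is a standard Cram\'er-type tilt, and everything we need (uniqueness of the minimiser and the characterisation $\lambda = 1 \iff \phi(\xi) = 1$) is already packaged into Lemma \ref{lem:stone}. The only small point of care is checking that the tilted measure $\hat{\omega}$ remains non-degenerate and aperiodic so that Lemma \ref{lem:stone} applies, but this is immediate from the fact that $\hat\omega$ and $\omega$ share the same support.
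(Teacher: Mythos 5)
Your proof is correct and follows essentially the same route as the paper: the same exponential tilt $\omega_\xi(m)=\phi_\omega(\xi)^{-1}e^{\langle\xi,m\rangle}\omega(m)$, the same computation $\phi_{\omega_\xi}(v)=\phi_\omega(v+\xi)/\phi_\omega(\xi)$ to get $\lambda(\Z^k,\omega_\xi)=1$ from Lemma \ref{lem:stone}, and the same convolution identity to transfer the spectral radius back. The extra checks you make (preservation of support, hence non-degeneracy and aperiodicity) are harmless and in the spirit of the paper's argument.
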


\begin{proof}
Suppose that $\phi_\omega(\xi)<1$. Then we can define a
new probability measure $\omega_\xi$ on $\Z^k$ by
\[
\omega_\xi(m) = (\phi_\omega(\xi))^{-1}  e^{\langle \xi,m \rangle} \omega(m).
\]
Then $\omega_\xi$ has the same support as $\omega$ and 
\[
\omega_\xi^{\ast n}(m) = (\phi_\omega(\xi))^{-n} e^{\langle \xi,m \rangle} \omega^{\ast n}(m).
\]
We have
\[
\sum_{m \in \Z^k} e^{\langle v,m \rangle } \omega_\xi(m) 
=\frac{1}{\phi_\omega(\xi)} \sum_{m \in \Z^k} e^{\langle v+\xi,m \rangle } \omega(m)
= \frac{\phi_\omega(v+\xi)}{\phi_\omega(\xi)}
\ge 1.
\]
Hence, $\lambda(\Z^k,\omega_\xi)=1$ and
\[
\lambda(\Z^k,\omega) = \phi_\omega(\xi) \lambda(\Z^k,\omega_\xi) = \phi_\omega(\xi).
\]
\end{proof}

We now state a ratio limit theorem due to Stone.

\begin{proposition}[Stone \cite{Stone}]\label{Prop:Stone}
Suppose that $\omega$ is non-degenerate and aperiodic. Then, for each $m \in \mathbb Z^k$,
\begin{equation}
\lim_{n \to \infty} \frac{\omega^{*n}(m)}{\omega^{*n}(0)} = e^{-\langle \xi,m \rangle}.
\end{equation}
\end{proposition}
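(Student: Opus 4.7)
The plan is to combine the exponential tilting trick already used in the proof of Corollary~\ref{cor:xi_measure} with a local central limit theorem for $\mathbb{Z}^k$-valued random walks, reducing matters to the centered case.

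First, with $\omega_\xi(m) = (\phi_\omega(\xi))^{-1} e^{\langle \xi, m\rangle} \omega(m)$, the identity $\omega_\xi^{\ast n}(m) = (\phi_\omega(\xi))^{-n} e^{\langle \xi, m\rangle} \omega^{\ast n}(m)$ yields
$$
\frac{\omega^{\ast n}(m)}{\omega^{\ast n}(0)} \;=\; e^{-\langle \xi, m\rangle}\, \frac{\omega_\xi^{\ast n}(m)}{\omega_\xi^{\ast n}(0)},
$$
so it suffices to show that the ratio on the right converges to $1$. Since $\xi$ is the unique minimizer of the smooth strictly convex function $\phi_\omega$ in the interior of its effective domain, one has $\nabla \phi_\omega(\xi) = 0$, which is precisely the statement that $\omega_\xi$ has mean zero. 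Moreover $\omega_\xi$ has the same support as $\omega$, so it inherits non-degeneracy and aperiodicity, and it has exponential moments in a neighbourhood of the origin, hence in particular a finite positive-definite covariance matrix $\Sigma$.

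Next I would invoke the local central limit theorem on $\mathbb{Z}^k$: under these hypotheses,
$$
\omega_\xi^{\ast n}(m) \;=\; \frac{1}{(2\pi n)^{k/2}\sqrt{\det \Sigma}}\, \exp\!\left(-\frac{\langle m, \Sigma^{-1} m\rangle}{2n}\right) + o(n^{-k/2}),
$$
uniformly in $m$. For fixed $m$ the Gaussian factor tends to $1$, so both numerator and denominator are asymptotic to $(2\pi n)^{-k/2}(\det \Sigma)^{-1/2}$ and their ratio tends to $1$. The LCLT itself is established by Fourier inversion, $\omega_\xi^{\ast n}(m) = (2\pi)^{-k} \int_{[-\pi, \pi]^k} \hat\omega_\xi(\theta)^n e^{-i\langle m, \theta\rangle}\, d\theta$, combined with the Taylor expansion $\hat\omega_\xi(\theta) = 1 - \tfrac12 \langle \theta, \Sigma\theta\rangle + o(|\theta|^2)$ near $\theta = 0$ and the bound $|\hat\omega_\xi(\theta)| < 1$ off the origin, a consequence of aperiodicity together with non-degeneracy.

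The main obstacle is verifying that the minimizer $\xi$ lies in the interior of the effective domain $\{v : \phi_\omega(v) < \infty\}$: if it sat on the boundary then $\omega_\xi$ need not have a finite second moment and the LCLT step would break down. Stone's original treatment handles the general case by working directly with the Fourier transform of $\omega$ and performing a saddle-point analysis of $\hat\omega(\theta)^n$ near its maximum-modulus point, in which the tilt is absorbed into a contour deformation. For the applications in the present paper this subtlety does not arise, as the relevant measures $\bar\mu$ are projections of probability measures on $G$ with a finite exponential moment of some positive order.
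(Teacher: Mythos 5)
The paper does not actually prove this proposition --- it is quoted from Stone --- so there is no internal argument to compare against, although the machinery of Section \ref{section:equidistribution} does reprove it, as the special case $G=\Z^k$ of Theorem \ref{thm:ratio}, by an entirely different equidistribution argument. Judged on its own terms, your reduction is the right first move and matches the normalisation the paper uses throughout: the tilting identity $\omega_\xi^{\ast n}(m)=\phi_\omega(\xi)^{-n}e^{\langle\xi,m\rangle}\omega^{\ast n}(m)$ is exactly the computation in Corollary \ref{cor:xi_measure}, and it correctly reduces the claim to $\omega_\xi^{\ast n}(m)/\omega_\xi^{\ast n}(0)\to 1$.

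The gap is the one you flagged yourself, and it is not as harmless as your closing paragraph suggests. The identity $\nabla\phi_\omega(\xi)=0$ --- hence the centring of $\omega_\xi$ and the finiteness and positive-definiteness of its covariance --- requires $\xi$ to lie in the interior of $\{v:\phi_\omega(v)<\infty\}$, and nothing in the hypotheses (non-degenerate and aperiodic, with no moment condition whatsoever) forces this. Your claim that the subtlety ``does not arise'' in this paper fails on two counts. First, Theorem \ref{thm:ratio} assumes no exponential moment at all; only Corollary \ref{cor:centred} does. Second, even a finite exponential moment of some order only guarantees that the effective domain contains a neighbourhood of $0$, not that the minimiser is interior: take $k=1$ and $\omega(\pm n)=c_\pm 2^{-n}n^{-3}$ for $n\ge 1$, with $c_-$ large relative to $c_+$; then $\phi_\omega$ is finite exactly on $[-\log 2,\log 2]$ and decreasing there, so $\xi=\log 2$ is a boundary minimiser with $\phi_\omega'(\xi)<0$, and $\omega_\xi$ is non-centred with infinite variance, so the Gaussian local limit theorem is simply unavailable. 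Deferring the general case to ``Stone's original treatment'' is then not a proof but a restatement of the citation. To close the gap you must either (i) add the standing hypothesis that $\xi$ is an interior critical point (which covers, e.g., finitely supported $\omega$), or (ii) genuinely carry out Stone's Fourier-analytic/saddle-point argument, or (iii) observe that the proposition follows from Theorem \ref{thm:ratio} applied to $G=\Z^k$, whose proof in Section \ref{section:equidistribution} avoids moment assumptions entirely and does not use Proposition \ref{Prop:Stone}, so no circularity arises.
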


Ratio limit theorems are intimately related to the existence of harmonic functions.
Given a random walk $(G,\mu)$,
we define the random walk operator $P_\mu : \ell_\mu^1(G) \to \ell_\mu^1(G)$ by
\[
P_\mu f(g) = \sum_{s \in G} p(g,s) f(s)= \sum_{s \in G} \mu(g^{-1}s) f(s).
\]
This may be written as a convolution $P_\mu f = f \ast \check{\mu}$, 
where $\check{\mu}(s)=\mu(s^{-1})$.
A function
$f : G \to \mathbb R^+$ is called $\lambda$-\emph{harmonic} if $P_\mu f=\lambda f$,
 i.e. if 
 \[
 \sum_{s \in G} \mu(g^{-1}s)f(s) = \lambda f(g).
 \]
 (Some authors define $f$ to be $\lambda$-harmonic if $\mu \ast f = \lambda f$.)
 If we write $h_\xi(m) = e^{-\langle \xi,m\rangle}$ for the limit in Proposition 
 \ref{Prop:Stone} then we see that the function $\check{h}_\xi(m) := h_\xi(-m) = e^{\langle \xi,m \rangle}$
 is $\lambda(\Z^k,\omega)$-harmonic (for $\omega$).
 Furthermore $\lambda=\lambda(\Z^k,\omega)$ is the smallest value for which there
 is a $\lambda$-harmonic function.

One may ask about ratio  limit theorems on more general groups than $\Z^k$.
Following earlier work by Avez \cite{Avez} and Gerl \cite{Gerl73},
a rather general ratio limit theorem was proved by Gerl \cite{Gerl78}, where it is obtained as a 
 corollary 
of the following limit theorem.
A detailed account and discussion may be found in the recent note by Woess \cite{Woess22}.

\begin{proposition}[Gerl's fundamental theorem \cite{Gerl78}]\label{thm:slowincrease}
Suppose that $\mu$ is a 
non-degenerate probability measure on $G$ such that
$(G,\mu)$ is aperiodic. 
Then we have
$$
\lim_{n \to \infty}
\frac{\mu^{\ast (n+1)}(e)}{\mu^{\ast n}(e)} = \lambda(G,\mu).
$$
\end{proposition}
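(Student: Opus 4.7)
The plan is to show that $c_n := \mu^{\ast(n+1)}(e)/\mu^{\ast n}(e)$ tends to $\lambda(G,\mu)$; set $a_n:=\mu^{\ast n}(e)$.

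\textbf{Preparation.} The convolution identity
\[
a_{n+m} \;=\; \sum_{g\in G}\mu^{\ast n}(g)\mu^{\ast m}(g^{-1}) \;\ge\; a_n a_m,
\]
obtained by retaining only the $g=e$ term, makes $(a_n)$ supermultiplicative. Aperiodicity ensures $a_n>0$ for $n\ge n_0$, so Fekete's lemma applied to $(\log a_n)$ gives $a_n^{1/n}\to\sup_n a_n^{1/n}=\limsup_n a_n^{1/n}=\lambda$. Telescoping $\prod_{k=n_0}^{n-1}c_k=a_n/a_{n_0}$ shows that the geometric means of the $c_n$'s already tend to $\lambda$. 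The main task is to upgrade this averaged convergence to convergence of $c_n$ itself.

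\textbf{Harnack estimate and compactness.} Set $R_n(g):=\mu^{\ast n}(g)/a_n$, so that $R_n(e)=1$. Non-degeneracy provides, for each $g\in G$, some $k=k(g)$ with $\mu^{\ast k}(g^{-1})>0$; then
\[
a_{n+k}\;\ge\;\mu^{\ast n}(g)\,\mu^{\ast k}(g^{-1})\quad\Longrightarrow\quad R_n(g)\;\le\;\frac{c_n c_{n+1}\cdots c_{n+k-1}}{\mu^{\ast k}(g^{-1})},
\]
so an \emph{a priori} uniform upper bound on $(c_n)$ would yield a uniform pointwise Harnack bound on $R_n(g)$. Granted this, a Cantor diagonalisation produces a subsequence $(n_j)$ along which $R_{n_j}(g)\to R(g)$ for every $g$, with $R(e)=1$. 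Along this subsequence
\[
c_{n_j}\;=\;\sum_s\mu(s)R_{n_j}(s^{-1})\;\longrightarrow\;c^\ast\;:=\;\sum_s\mu(s)R(s^{-1}),
\]
and passing to the limit in the recursion $c_n R_{n+1}=\mu\ast R_n$ identifies $\check R$ as a positive $c^\ast$-harmonic function for $P_\mu$. Existence of such a positive harmonic function forces $c^\ast\ge\lambda$, while pushing the geometric-mean convergence of the Preparation step through the subsequence forces $c^\ast\le\lambda$. Hence every subsequential limit equals $\lambda$, and $c_n\to\lambda$.

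\textbf{Main obstacle.} The hard part is the \emph{a priori} upper bound on $(c_n)$: without it, the Harnack inequality above is circular (the desired bound on $R_n$ is used to bound $c_n$, yet bounding $c_n$ is exactly what is needed to bound $R_n$). I expect this to require additional structure of the random walk — for instance an $\ell^2$-operator-norm bound for the convolution operator $P_\mu$, or a symmetrisation comparison with the walk driven by $\mu\ast\check\mu$ where the classical symmetric theory (and the fact that $a_{2n}\le\sum_g\mu^{\ast n}(g)^2$ by Cauchy--Schwarz) furnishes the required control independently of the conclusion sought.
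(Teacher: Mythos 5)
Your proposal is incomplete, and you have correctly identified the principal missing ingredient yourself: without an a priori upper bound on $c_n=\mu^{\ast(n+1)}(e)/\mu^{\ast n}(e)$, the Harnack bound on $R_n$ is circular, and no such bound follows from supermultiplicativity alone (a supermultiplicative sequence $u_n\in(0,1]$ with $u_n^{1/n}\to 1$ can have wildly oscillating consecutive ratios, so the convolution structure must enter in an essential way). There is also a second, unacknowledged gap at the end. Even granting the compactness step, your two inequalities do not combine to give the conclusion: the harmonic-function argument yields $c^{\ast}\ge\lambda$ for every subsequential limit, hence $\liminf_n c_n\ge\lambda$, while convergence of the geometric means $(a_n/a_{n_0})^{1/(n-n_0)}\to\lambda$ yields only $\liminf_n c_n\le\lambda$. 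Together these give $\liminf_n c_n=\lambda$ but say nothing about $\limsup_n c_n$: a Ces\`aro average of $\log(c_k/\lambda)$ tending to $0$ is perfectly compatible with a sparse subsequence of spikes $c_{n_j}\to c^{\ast}>\lambda$, and ruling these out is precisely the hard content of Gerl's theorem. (A smaller issue: passing to the limit in $\mu\ast R_n=c_nR_{n+1}$ requires $R_{n_j}$ and $R_{n_j+1}$ to converge to the \emph{same} limit, and the interchange of limit and the sum over $s$ needs justification, e.g.\ Fatou together with the normalisation $R_n(e)=1$.)

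You should also be aware that the paper does not prove Proposition \ref{thm:slowincrease} in general: it is quoted as a theorem of Gerl, with Woess's survey cited for a detailed account. What the paper proves (in Section \ref{section:equidistribution}) is only the case where $G$ is amenable, and by an entirely different mechanism: it encodes the walk as the full shift on $S_\mu^{\mathbb{N}}$, uses Theorem \ref{thm:kesten} to replace $\mu$ by the tilted measure $\mu_\xi$ satisfying $\lim_n(\mu_\xi^{\ast n}(e))^{1/n}=1$, and deduces $\mu_\xi^{\ast(n-k)}(u^{-1})/\mu_\xi^{\ast n}(e)\to1$ from an equidistribution statement (Proposition \ref{prop:equidistribution}) proved by a large-deviations bound for the Bernoulli measure $\nu_\xi$ via Gurevi\v{c} pressure; the ratio limit for $e$ then follows by telescoping. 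So the a priori bound you are missing is not supplied anywhere in this paper. For a complete general proof you would need Gerl's original argument; if the amenable case suffices for your purposes, the equidistribution route is the one to follow.
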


Gerl used this to prove the following conditional ratio limit theorem.

\begin{proposition}[Gerl's ratio limit theorem \cite{Gerl78}] \label{prop:gerl_ratio}
Suppose that $\mu$ is a 
non-degenerate probability measure on $G$ such that $(G,\mu)$ is aperiodic.
Suppose there is a set $\mathfrak F \subset \{f : G \to \mathbb R^+ \hbox{ : } f(e)=1\}$
such that
\begin{enumerate}
\item[(1)]
if $f : G \to \mathbb R^+$ is defined by
\[
f(g) = \lim_{j \to \infty} \frac{\mu^{\ast n_j}(g)}{\mu^{\ast n_j}(e)},
\]
for some subsequence $(n_j)_{j=1}^\infty$, then $f \in \mathfrak F$;
\item[(2)]
there exists a unique $h \in \mathfrak F$ satisfying
the equation $\mu \ast h = \lambda(G,\mu) h$.
\end{enumerate}
Then 
\[
\lim_{n \to \infty} \frac{\mu^{\ast n}(g)}{\mu^{\ast n}(e)} = h(g),
\]
for all $g \in G$.
\end{proposition}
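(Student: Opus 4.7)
My strategy is the standard compactness-uniqueness argument: produce pointwise subsequential limits of $a_n(g) := \mu^{\ast n}(g)/\mu^{\ast n}(e)$, use hypotheses (1)--(2) to pin down the limit as $h$, and conclude the full sequence converges.

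I would begin by showing that, for each fixed $g \in G$, the ratios $a_n(g)$ are bounded in $n$. Since $S_\mu$ generates $G$ as a semigroup, write $g^{-1} = s_m \cdots s_1$ with $s_i \in S_\mu$; then $\mu^{\ast(n+m)}(e) \geq \mu^{\ast n}(g)\mu(s_1)\cdots\mu(s_m)$, so $a_n(g) \leq \mu^{\ast(n+m)}(e)/(c_g \mu^{\ast n}(e))$, where $c_g = \mu(s_1)\cdots\mu(s_m) > 0$, and this is bounded in $n$ by iterating Proposition~\ref{thm:slowincrease}. A standard diagonal argument over the countable set $G$ then shows that every subsequence admits a further subsequence $(n_j)$ along which $a_{n_j}(g) \to f(g)$ pointwise for every $g \in G$, with $f(e)=1$. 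By hypothesis (1), $f \in \mathfrak{F}$.

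The main step is to verify that $f$ satisfies $\mu \ast f = \lambda(G,\mu) f$. The convolution identity $\mu^{\ast(n+1)}(g) = \sum_s \mu(s) \mu^{\ast n}(s^{-1}g)$ divided by $\mu^{\ast n}(e)$ reads
\[
\sum_{s \in G} \mu(s)\, a_n(s^{-1}g) \;=\; \frac{\mu^{\ast(n+1)}(e)}{\mu^{\ast n}(e)}\, a_{n+1}(g).
\]
Refining the subsequence so that $a_{n_j+1} \to \tilde{f} \in \mathfrak{F}$ as well, and invoking Proposition~\ref{thm:slowincrease}, the right-hand side tends to $\lambda(G,\mu)\,\tilde{f}(g)$, while Fatou's lemma on the left yields $(\mu \ast f)(g) \leq \lambda(G,\mu)\,\tilde{f}(g)$. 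Iterating along further shifted subsequences $(n_j+k)$ produces limits $f_k \in \mathfrak{F}$ with $\mu \ast f_k \leq \lambda(G,\mu) f_{k+1}$ and $f_k(e)=1$; evaluating the iterated inequality $\mu^{\ast N}\ast f_0 \leq \lambda(G,\mu)^N f_N$ at $e$ and comparing with $\mu^{\ast(N+n)}(e)/\mu^{\ast n}(e) \to \lambda(G,\mu)^N$ (Proposition~\ref{thm:slowincrease} again) forces equality throughout. In particular $\mu \ast f = \lambda(G,\mu) f$, so by the uniqueness in hypothesis (2), $f = h$. Since every subsequence of $\{n\}$ contains a sub-subsequence on which $a_n(g) \to h(g)$, the whole sequence converges to $h(g)$ pointwise.

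\textbf{Main obstacle.} The technical heart is turning Fatou's inequality into the harmonic equation $\mu \ast f = \lambda(G,\mu) f$ when $S_\mu$ is infinite, as there is no obvious integrable majorant permitting reverse Fatou. The resolution is to use the exact total-mass asymptotics from Proposition~\ref{thm:slowincrease} iteratively along shifted subsequences, which rules out mass escape to infinity and pins down equality in the chain of Fatou estimates.
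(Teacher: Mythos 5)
The paper does not actually prove this proposition: it is quoted from Gerl \cite{Gerl78} (with \cite{Woess22} cited for a detailed account), so there is no internal proof to compare against; I can only assess your argument on its own terms. Your compactness--uniqueness skeleton is surely the right one, and the preliminary steps are fine: boundedness of $a_n(g)$ via the semigroup property of $S_\mu$ together with Proposition \ref{thm:slowincrease}, the diagonal extraction over the countable group, and the appeal to hypotheses (1) and (2) once harmonicity of the subsequential limit is known. The problem is that the central step --- that a subsequential limit $f$ satisfies $\mu\ast f=\lambda(G,\mu)f$ --- is not actually established, for two distinct reasons.

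First, your Fatou argument produces the relation $\mu\ast f_k\le\lambda f_{k+1}$ between \emph{consecutive} accumulation points $f_k=\lim_j a_{n_j+k}$; even if every inequality in this chain were an equality you would only have $\mu\ast f_0=\lambda f_1$, which is not the eigenvalue equation $\mu\ast h=\lambda h$ of hypothesis (2) unless you also show $f_1=f_0$. Your ``in particular $\mu\ast f=\lambda(G,\mu)f$'' silently identifies $f_1$ with $f_0$, which is essentially the uniqueness of accumulation points you are trying to prove. Second, the proposed mechanism for upgrading $\le$ to $=$ does not work: evaluating $\mu^{\ast N}\ast f_0\le\lambda^N f_N$ at $e$ gives $\sum_s\mu^{\ast N}(s)f_0(s^{-1})\le\lambda^N$, while the comparison quantity $\sum_s\mu^{\ast N}(s)a_{n_j}(s^{-1})=\mu^{\ast(N+n_j)}(e)/\mu^{\ast n_j}(e)\to\lambda^N$ yields, again only by Fatou, the \emph{same} upper bound $\sum_s\mu^{\ast N}(s)f_0(s^{-1})\le\lambda^N$. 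You obtain two upper bounds and no lower bound, so nothing is squeezed and equality is not forced. A genuine lower bound would require precisely the no-escape-of-mass statement you flag as the main obstacle; asserting that the total-mass asymptotics ``rules out mass escape'' restates what must be proved rather than proving it. Note that $a_n(s^{-1})$ is bounded in $n$ for each fixed $s$ but not uniformly over $s\in G$, so tightness of the sums over an infinite $S_\mu$ is not automatic. As written, the argument shows only that accumulation points satisfy a one-sided, index-shifted superharmonicity relation, which is not enough to invoke hypothesis (2).
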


In particular, if we have uniqueness of a $\lambda(G,\mu)$-harmonic function for $(G,\mu)$ then we know the ratio limit theorem holds.
The advantage of our Theorem \ref{thm:ratio}, for amenable groups, is that we don't consider arbitrary harmonic functions instead we directly work with functions coming from the abelianisation.

\section{Proof of Corollary \ref{cor:centred}}\label{sec:proof_of_centred}

In this section we prove Corollary \ref{cor:centred}, assuming Theorem \ref{thm:kesten}.
We note that $\phi_\mu = \phi_{\bar{\mu}}$, so we can use Lemma \ref{lem:stone}.

\begin{proof}[Proof of Corollary \ref{cor:centred}]
If $G$ is not amenable then 
Theorem \ref{thm:kesten} tells us that $\lambda(G,\mu) < \lambda(\overline{G},\bar{\mu}) \le 1$;
so it suffices to show that, if $G$ is amenable, then $\lambda(\overline{G},\bar{\mu}) =1$ if and only if $\mu$ is centred. 

Since $\bar{\mu}$ is non-degenerate, $\phi_\mu(v)$ is strictly convex on the set where
it is finite.
The hypothesis of a finite exponential moment implies that $\phi_\mu(v)$ is finite
and differentiable in some neighbourhood of $0 \in \R^k$. 
Therefore, $\phi_\mu(v)$ has its unique minimum at $v=0$ if and only if $\nabla \phi_\mu(0)=0$.
Suppose that
$\lambda(\overline{G},\bar{\mu})=1$; then, by Lemma \ref{lem:stone}, $\phi_\mu$ has its minimum at $0$
and so
\[
0 = \nabla \phi_\mu(0) = \sum_{m \in \Z^k} m \bar{\mu}(m),
\]
i.e. $\mu$ is centred. On the other hand, if $\lambda(\overline{G},\bar{\mu})<1$ then,
again by Lemma \ref{lem:stone}, the unique minimum of $\phi_\mu$ is not at $0$ and so
\[
0 \ne \nabla \phi_\mu(0) = \sum_{m \in \Z^k} m \bar{\mu}(m),
\]
i.e. $\mu$ is not centred.
\end{proof}

\section{Proof of Theorem \ref{thm:kesten} ($\impliedby$)}\label{section:toamenable}

In this section we show that if $\lambda(G,\mu)=\lambda(\overline G,\bar \mu)$ then $G$ is amenable. (In Kesten's original theorem, this was the harder direction but in our case
it is the easier implication.) Noting that $\phi_\mu = \phi_{\bar{\mu}}$, recall from Section \ref{sec:limit_theorems} that there is a unique
$\xi \in \R^k$ such that
\[
\phi_\mu(\xi) = \inf_{v \in \R^k} \phi_\mu(v)
\]
and 
$$
\phi_\mu(\xi) = \sum_{g\in G}  \mu(g)e^{\langle \xi,\pi(g)\rangle} =\lambda(\overline{G},\bar{\mu}).
$$
We
define a new probability measure $\mu_\xi$ on $G$ (analogous to the measure $\omega_\xi$ on
$\Z^k$ in the proof of Corollary \ref{cor:xi_measure}) by
\[
\mu_\xi(g) =\phi_\mu(\xi)^{-1}  e^{\langle \xi,\pi(g)\rangle} \mu(g).
\]

\begin{proof}[Proof of Theorem \ref{thm:kesten} ($\impliedby$)]
Assume that $G$ is non-amenable. 
By Theorem 1 of Day \cite{Day} (see also Theorem 5.4 of Stadlbauer \cite{Stadlbauer1}), 
we see that the probability measure $\mu_\xi$ satisfies
$$
\limsup_{n\to\infty} (\mu_\xi^{\ast n}(e))^{1/n}<1.
$$
Unpicking the definitions, $\mu_\xi^{\ast n}(e) = \phi_\mu(\xi)^{-n} \mu^{\ast n}(e)$. Hence
$
\limsup_{n\to\infty} (\mu^{\ast n}(e))^{1/n}<\phi_\mu(\xi).
$
\end{proof}

\section{Proof of Theorem \ref{thm:kesten} ($\implies$)}\label{section:amenableto}

In this section we will show the harder implication that if $G$ is amenable 
then  $\lambda(\overline G,\bar \mu) \le \lambda(G,\mu)$, and hence that
$\lambda(\overline G,\bar \mu) = \lambda(G,\mu)$.
We remark that the proof given here is significantly easier and more direct than the one 
we gave 
in \cite{DougallSharp}.
Following that proof would introduce a family of measures, indexed by $g \in G$,
on the space $S_\mu^{\mathbb{N}} \times G$, each describing the paths that visit 
$S_\mu^{\N} \times \{g\}$. 
These measures (which are also analysed in \cite{Stadlbauer2}) are not required
here.

Let us begin by emphasising the following: though we know that $\bar{\mu}$ has a  
$\lambda(\overline{G},\bar{\mu})$-harmonic function \emph{it plays no role in this part of proof}! 
The first element of the proof is the following proposition. Subsequently, the rest of the section will 
be devoted to showing that
show its hypothesis is satisfied with $\lambda = \lambda(G,\mu)$.

\begin{proposition}\label{prop:conditional}
If there is a homomorphism $h : \overline{G} \to \mathbb{R}^{>0}$,
the multiplicative group of positive real numbers,
such that, for all $n \in \N$,
$$
\sum_{g\in G} \mu^{\ast n}(g) h(-\pi(g)) \le \lambda^n
$$
then $\lambda(\overline G,\bar{\mu})\le \lambda$.
\end{proposition}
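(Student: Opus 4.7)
The plan is to observe that a positive multiplicative homomorphism out of $\overline{G} \cong \Z^k$ is automatically of the form $m \mapsto e^{\langle v, m \rangle}$ for some $v \in \R^k$, which converts the hypothesis into a statement about the Laplace-type transform $\phi_{\bar{\mu}}$. Since $\phi_{\bar\mu} = \phi_\mu$ is pointwise bounded below by its infimum, which by Corollary \ref{cor:xi_measure} equals $\lambda(\overline{G},\bar{\mu})$, the conclusion then drops out.

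More concretely, I would first note that taking $\log$ identifies $\mathrm{Hom}(\overline{G}, \R^{>0})$ with $\mathrm{Hom}(\Z^k, \R)$, and every element of the latter is of the form $m \mapsto \langle v, m \rangle$ for a unique $v \in \R^k$. Write $h(m) = e^{\langle v, m\rangle}$, so that $h(-\pi(g)) = e^{-\langle v,\pi(g)\rangle}$.

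Next, I would push the sum through $\pi$. Since $\pi$ is a group homomorphism, $\pi_{*}(\mu^{\ast n}) = \bar{\mu}^{\ast n}$, and so
\[
\sum_{g \in G} \mu^{\ast n}(g)\, h(-\pi(g)) \;=\; \sum_{m \in \Z^k} \bar{\mu}^{\ast n}(m)\, e^{-\langle v,m\rangle}.
\]
Multiplicativity of this transform under convolution on $\Z^k$ turns the right-hand side into $\phi_{\bar{\mu}}(-v)^n$. Hence the hypothesis of the proposition is equivalent to $\phi_{\bar{\mu}}(-v) \le \lambda$; in fact the $n=1$ case of the assumption already gives the full content, and all larger $n$ are automatic.

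To finish, apply Corollary \ref{cor:xi_measure}: since $\lambda(\overline{G},\bar{\mu}) = \inf_{w\in\R^k}\phi_{\bar{\mu}}(w)$ is achieved at $\xi$, we obtain $\lambda(\overline{G},\bar{\mu}) \le \phi_{\bar{\mu}}(-v) \le \lambda$, as required. I do not expect a real obstacle here; the argument is essentially algebraic and takes less than a page. The genuine work of this section, which occupies the rest of it, is not in establishing this proposition but in constructing the homomorphism $h$ whose existence the proposition assumes, using amenability of $G$ and a Patterson–Sullivan / F{\o}lner-type argument to make the inequality $\sum_g \mu^{\ast n}(g)\, h(-\pi(g)) \le \lambda(G,\mu)^n$ hold.
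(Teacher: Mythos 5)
Your proof is correct, but it takes a genuinely different route from the paper's. The paper's argument is more elementary: since $h>0$ and $h(0)=1$, one simply discards every term of $\sum_{g}\mu^{\ast n}(g)h(-\pi(g))$ except those with $\pi(g)=0$, obtaining $\bar{\mu}^{\ast n}(0)\le\lambda^n$ directly and hence $\limsup_n(\bar{\mu}^{\ast n}(0))^{1/n}\le\lambda$ (phrased in the paper via convergence of the series $\sum_n\bar{\mu}^{\ast n}(0)(\delta^{-1}\lambda)^{-n}$). That version uses nothing about the structure of $\overline{G}$ beyond positivity of $h$, does not invoke Stone's results, and is consistent with the authors' stated intention that the harmonic function for $\bar{\mu}$ ``plays no role in this part of the proof.'' Your version instead identifies $h$ with $m\mapsto e^{\langle v,m\rangle}$, uses $\pi_*(\mu^{\ast n})=\bar{\mu}^{\ast n}$ and the multiplicativity of the Laplace transform under convolution (valid by Tonelli, all terms being non-negative) to rewrite the hypothesis as $\phi_{\bar{\mu}}(-v)^n\le\lambda^n$, and then appeals to Corollary \ref{cor:xi_measure}. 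What this buys you is the observation that the $n=1$ case of the hypothesis already implies all the others, and the slightly sharper chain $\lambda(\overline{G},\bar{\mu})=\inf_w\phi_{\bar{\mu}}(w)\le\phi_{\bar{\mu}}(-v)\le\lambda$; what it costs is dependence on $\overline{G}\cong\Z^k$ and on the variational characterisation of $\lambda(\overline{G},\bar{\mu})$ from Section \ref{sec:limit_theorems}, whereas the paper's argument would go through verbatim for a homomorphism into the positive reals from any quotient. Both are complete proofs; you also correctly identify that the substance of the section lies in constructing $h$, not in this proposition.
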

\begin{proof}
Suppose such a homomorphism $h$ exists.
Since $h$ is positive so we can throw away the terms where $-\pi(g)\ne 0$
and obtain
$$
\sum_{\substack{g\in G \\ \pi(g)=0}} \mu^{\ast n}(g) \le \lambda^n.
$$
Hence, for any $\delta<1$,
$$
\sum_{n\in\mathbb{N}}
\bar{\mu}^{\ast n}(0)(\delta^{-1}\lambda)^{-n}
=\sum_{n\in\mathbb{N}}
\sum_{\substack{g\in G \\ \pi(g)=0}} \mu^{\ast n}(g)\delta^{n}\lambda^{-n} <\infty. 
$$
This says that $\lambda(\overline{G},\bar{\mu}) \le \lambda\delta^{-1}$. Since we can take $\delta$ arbitrarily close to $1$ we are done.
\end{proof}

We view $\lambda(G,\mu)$ as a convergence parameter for the series 
$$
\zeta(t)= \sum_{n\in\mathbb{N}} \mu^{\ast n}(e) t^{-n},
$$ 
where $t>0$,
i.e.
$$
\inf\left \{ t\in \mathbb{R}^+ : \sum_{n\in\mathbb{N}} \mu^{\ast n}(e) t^{-n}<\infty\right\} = \limsup_{n\to\infty}(\mu^{\ast n}(e))^{1/n}
= \lambda(G,\mu).
$$
This formulation is reminiscent of the Poincar\'e series used in the construction of Patterson--Sullivan measures on the limit sets of Kleinian groups and more general limit sets and, indeed,
we employ ideas from this theory
The most relevant reference here is Roblin \cite{Roblin},
which covers the basic tools of Patterson-Sullivan theory and the amenability ``trick'' we 
will use in the proof of Proposition \ref{prop:h_exists}.

The series $\zeta(t)$ does not necessarily diverge at $t=\lambda(G,\mu)$
but one can modify the series,
in a controlled way, to guarantee divergence at this critical parameter. 
The following appears as Lemma 3.2 in Denker and Urbanski \cite{urbanski}
(see also \cite{Stadlbauer2}).

\begin{lemma}\label{lem:slowly_diverging}
Let $(a_n)_{n=1}^\infty$ be a sequence in $\R^+$ and let $\rho = \limsup_{n\to\infty} a_n^{1/n}$.
Then there is a 
sequence $(b_n)_{n=1}^\infty$ of positive real numbers such that
$\lim_{n \to \infty} b_{n+1}/b_n=1$ 
for which $\sum_{n\in\mathbb{N}} a_nb_n t^{-n} < \infty$ for $t >\rho$ but
$$
\sum_{n\in\mathbb{N}} a_nb_n\rho^{-n}=\infty.
$$
\end{lemma}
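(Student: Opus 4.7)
The plan is to split into two cases according to whether $\sum_n a_n \rho^{-n}$ already diverges. If it does, setting $b_n \equiv 1$ immediately gives the conclusion, so all the content is in the complementary case, where $c_n := a_n \rho^{-n}$ satisfies $\sum_n c_n < \infty$. The key observation, though, is that even in this case we still have $\limsup_n c_n^{1/n} = 1$, since dividing by $\rho^n$ renormalises the $\limsup$ of the $n$-th roots to $1$. In other words, $c_n$ cannot be exponentially small along every subsequence, and this is exactly what I would exploit.

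I would first use $\limsup_n c_n^{1/n} = 1$ to pick a subsequence $N_1 < N_2 < \cdots$ with $c_{N_k} \ge (1 - 1/k)^{N_k}$, and impose in addition that the gaps grow very rapidly, say $N_{k+1} \ge k N_k$. I would then set $b_{N_k} := k/c_{N_k}$ (so that $c_{N_k} b_{N_k} = k$) and extend $b_n$ to intermediate indices by log-linear interpolation on each block $[N_k, N_{k+1}]$. Divergence at $t = \rho$ then comes essentially for free:
\[
\sum_n c_n b_n \;\ge\; \sum_k c_{N_k} b_{N_k} \;=\; \sum_k k \;=\; +\infty.
\]

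The real work is verifying the slow-ratio condition $b_{n+1}/b_n \to 1$. By log-linearity the ratio is constant on each block and equals $(b_{N_{k+1}}/b_{N_k})^{1/(N_{k+1} - N_k)}$. Bounding $c_{N_k} \le S := \sum_n c_n$ from above and using $c_{N_{k+1}} \ge (1 - 1/(k+1))^{N_{k+1}}$ from below (and symmetrically for the lower estimate) produces factors of the form $(1 \pm 1/k)^{N_k/(N_{k+1}-N_k)}$ and $(1 \pm 1/k)^{N_{k+1}/(N_{k+1}-N_k)}$; the sparseness $N_{k+1} \ge kN_k$ forces both exponents to stay bounded (by $1/(k-1)$ and $k/(k-1)$ respectively), which sends both bounds to $1$. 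Once $b_{n+1}/b_n \to 1$ we get $b_n^{1/n} \to 1$, so $\limsup_n (a_n b_n)^{1/n} = \rho$ and the convergence of $\sum_n a_n b_n t^{-n}$ for $t > \rho$ is automatic.

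The main obstacle is the three-way tension between the requirements. Producing divergence at $\rho$ forces $b_{N_k}$ to be as large as $k/c_{N_k}$, which can be enormous whenever $c_{N_k}$ is tiny; spreading this jump log-linearly over too short a block would then produce per-step ratios bounded away from $1$ and could even push the radius of convergence beyond $\rho$. The rapid-growth condition $N_{k+1} \ge k N_k$ is precisely what gives each block enough room to absorb the jump while keeping the multiplicative increments arbitrarily close to $1$, and engineering this trade-off is the one delicate step of the argument.
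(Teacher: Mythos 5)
Your proof is correct. Note that the paper does not actually prove this lemma — it is quoted as Lemma 3.2 of Denker and Urbanski — so there is no internal argument to compare against; what you have written is a self-contained version of the Patterson-type construction that underlies the cited result. The details check out: the case $\sum_n a_n\rho^{-n}=\infty$ is dispatched by $b_n\equiv 1$; in the remaining case the existence of the sparse subsequence $N_k$ with $c_{N_k}\ge(1-1/k)^{N_k}$ and $N_{k+1}\ge kN_k$ follows from $\limsup_n c_n^{1/n}=1$, the per-block ratio is squeezed between quantities of the form $S^{\pm 1/(N_{k+1}-N_k)}(1-1/k)^{1/(k-1)}$ and $((k+1)S/k)^{1/(N_{k+1}-N_k)}(1-1/(k+1))^{-k/(k-1)}$, both of which tend to $1$, so $b_{n+1}/b_n\to 1$; this gives $b_n^{1/n}\to 1$ by Ces\`aro, hence $\limsup_n(a_nb_n)^{1/n}=\rho$ and convergence for $t>\rho$ by the root test, while $\sum_k c_{N_k}b_{N_k}=\sum_k k$ forces divergence at $t=\rho$. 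The only implicit (and harmless) points are that the statement is only meaningful for $0<\rho<\infty$, which is the situation in which the lemma is applied, and that $b_n$ must be defined (say, constantly) for the finitely many indices $n<N_1$.
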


Let $(b_n)_{n=1}^\infty$ be the 
sequence given by Lemma \ref{lem:slowly_diverging}
 for the series with $a_n = \mu^{\ast n}(e)$. We prefer to use $c_n=1/b_n$,
 a decreasing sequence.
 Note that we have
 $\lim_{n \to \infty} c_{n-r}/c_n =1$ for all $r \in \mathbb N$.
We will work with a modified series $\zeta_c^e(t)$ defined by
$$
\zeta^{e}_c(t) = \sum_{n\in\mathbb{N}} \frac{\mu^{\ast n}(e)}{c_n} t^{-n},
$$
and also, for each $g\in G$, the series
$$
\zeta^{g}_c(t) = \sum_{n\in\mathbb{N}} \frac{\mu^{\ast n}(g)}{c_n} t^{-n}.
$$

\begin{lemma}
For each $g\in G$,
$$
0<\inf_{\lambda(G,\mu) <t \le 2}\frac{\zeta_c^g(t)}{\zeta_c^e(t)}
\le \sup_{\lambda(G,\mu) <t \le 2}\frac{\zeta_c^g(t)}{\zeta_c^e(t)}<\infty.
$$
\end{lemma}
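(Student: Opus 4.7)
The plan is to exploit non-degeneracy in both directions: since $S_\mu$ generates $G$ as a semigroup, for each fixed $g \in G$ we can find $N = N(g)$ and $M = M(g)$ in $\N$ such that $\mu^{\ast N}(g) > 0$ and $\mu^{\ast M}(g^{-1}) > 0$. This gives two natural ``bridge'' estimates via the convolution formula: concatenating a path of length $n$ from $e$ to $e$ with a path of length $N$ from $e$ to $g$ yields
\[
\mu^{\ast (n+N)}(g) \ge \mu^{\ast n}(e) \, \mu^{\ast N}(g),
\]
while concatenating a path of length $n$ from $e$ to $g$ with a path of length $M$ from $g$ to $e$ yields
\[
\mu^{\ast (n+M)}(e) \ge \mu^{\ast n}(g) \, \mu^{\ast M}(g^{-1}).
\]

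Next, I would combine each inequality with the monotonicity of $(c_n)$. Since $(c_n)$ is decreasing we have $c_{n+N} \le c_n$, so dividing the first inequality by $c_{n+N}$ gives $\mu^{\ast(n+N)}(g)/c_{n+N} \ge \mu^{\ast N}(g) \, \mu^{\ast n}(e)/c_n$. Multiplying by $t^{-(n+N)}$ and summing over $n \ge 1$ then produces a bound of the form
\[
\zeta_c^g(t) \ge \mu^{\ast N}(g)\, t^{-N} \, \zeta_c^e(t).
\]
An entirely symmetric argument applied to the second inequality (this time using $1/c_n \le 1/c_{n+M}$ to pass from the weight $c_n$ to $c_{n+M}$) yields
\[
\zeta_c^g(t) \le \frac{t^M}{\mu^{\ast M}(g^{-1})} \, \zeta_c^e(t).
\]

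Finally, I would restrict to the compact range $t \in (\lambda(G,\mu),2]$, where $t^{-N} \ge 2^{-N}$ and $t^M \le 2^M$, giving the uniform estimates
\[
\frac{\mu^{\ast N}(g)}{2^N} \le \frac{\zeta_c^g(t)}{\zeta_c^e(t)} \le \frac{2^M}{\mu^{\ast M}(g^{-1})},
\]
and both outer quantities are finite and strictly positive by the choice of $N$ and $M$. There is no real obstacle here; the only thing to be careful about is keeping the direction of the monotonicity of $(c_n)$ straight when passing between the weights $c_n$ and $c_{n \pm N}$ or $c_{n \pm M}$ (alternatively, one could appeal to the asymptotic relation $c_{n-r}/c_n \to 1$ noted after Lemma \ref{lem:slowly_diverging}, at the cost of handling a tail). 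The boundedness of $t$ away from $0$ and $\infty$ does the rest.
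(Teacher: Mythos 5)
Your proof is correct and follows essentially the same route as the paper: a convolution ``bridging'' inequality $\mu^{\ast(n+k)}(\cdot)\ge\mu^{\ast n}(\cdot)\mu^{\ast k}(\cdot)$ supplied by non-degeneracy, combined with control of the weights $c_n$ versus $c_{n\pm k}$ and the restriction $t\le 2$ to absorb the factors $t^{\pm k}$. The only cosmetic difference is that the paper proves the single comparison $\zeta_c^g(t)\ge C\,\zeta_c^h(t)$ for arbitrary $g,h$ and obtains both bounds by symmetry, whereas you treat the two directions separately with $h=e$; your explicit bookkeeping of the $t^{-n}$ factors and of the monotonicity of $(c_n)$ is, if anything, slightly more careful than the paper's displayed computation.
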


\begin{proof}
We begin by observing that, for every $g,h\in G$, we have
$$
\mu^{\ast (n+k)}(g)\ge  \mu^{\ast k}(gh^{-1}) \mu^{\ast n}(h)
$$
and we may choose $k \ge 1$ for which $ \mu^{\ast k}(gh^{-1}) >0$.
This gives us the inequality
\begin{align*}
\zeta_c^g(t) = \sum_{m\le k} \frac{\mu^{\ast m}(g)}{c_{m}} + \sum_{n\in\mathbb{N}} \frac{\mu^{\ast (n+k)}(g)}{c_{n+k}}
\ge 
 \sum_{m\le k} \frac{\mu^{\ast m}(g)}{c_{m}} + \sum_{n\in\mathbb{N}} \mu^{\ast k}(gh^{-1}) \frac{\mu^{\ast n}(h)}{c_{n}}\frac{c_{n}}{c_{n+k}}.
 \end{align*}
Since the numbers $c_n$ are positive and, for a fixed $k$, $\lim_{n \to \infty} c_{n}/c_{n+k}=1$, 
we have $\inf_n c_{n}/c_{n+k}>0$. Hence
$$
\zeta_c^g(t) \ge  C_1(g,k) +  \frac{C_2(g,h,k)}{C_3(k)}\sum_{n\in\mathbb{N}}  \frac{\mu^{\ast n}(h)}{c_{n}},
$$
for positive $C_1$, $C_2$ and $C_3$.
We conclude
$$
0<\inf_{\lambda(G,\mu) <t \le 2}\frac{\zeta_c^g(t)}{\zeta_c^h(t)}.
$$
Since $g,h$ are arbitrary the lemma follows.
\end{proof}

By the previous lemma
and a standard diagonal argument, there exists a sequence $t_n\to \lambda(G,\mu)$ for which the following limits are well-defined
$$
H(g) = \lim_{n\to\infty}\frac{\zeta_c^g(t_n)}{\zeta_c^e(t_n)} \in (0,\infty),
$$
for all $g \in G$.
A crucial observation is the following.

\begin{lemma}\label{lemma:main}
For any $r\in\mathbb{N}$, we have
\[
\sum_{s\in G} \mu^{\ast r}(s) H(s^{-1}g) = \lambda^r H(g)
\]
with $\lambda = \lambda(G,\mu)$.
\end{lemma}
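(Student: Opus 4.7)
The approach is to exploit the convolution identity $\mu^{\ast(n+r)}(g) = \sum_{s\in G} \mu^{\ast r}(s)\,\mu^{\ast n}(s^{-1}g)$, substitute it into the slowly diverging series $\zeta_c^g(t)$, and pass to the limit along the chosen subsequence $t_n\to\lambda(G,\mu)$.

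The first step is to peel off the initial $r$ terms of $\zeta_c^g(t)$ and apply the convolution identity to the tail, so that
\[
\zeta_c^g(t) = A_r(g,t) + t^{-r}\sum_{s\in G}\mu^{\ast r}(s)\,B_s(t),
\]
with $A_r(g,t)=\sum_{n=1}^{r}\mu^{\ast n}(g)t^{-n}/c_n$ and $B_s(t)=\sum_{m=1}^{\infty}\mu^{\ast m}(s^{-1}g)t^{-m}/c_{m+r}$. The interchange of summations is legitimate by Tonelli since every term is non-negative.

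Dividing by $\zeta_c^e(t_n)$ and letting $n\to\infty$, I would use that $\zeta_c^e(\lambda)=\infty$ by the construction of $(c_n)$ in Lemma \ref{lem:slowly_diverging}, so that $\zeta_c^e(t_n)\to\infty$ and the bounded segment $A_r(g,t_n)/\zeta_c^e(t_n)$ vanishes, while the left-hand side converges to $H(g)$. Since $t_n^{-r}\to\lambda^{-r}$, this yields
\[
\sum_{s\in G}\mu^{\ast r}(s)\frac{B_s(t_n)}{\zeta_c^e(t_n)}\,\longrightarrow\,\lambda^r H(g).
\]
For each fixed $s$ I would compare $B_s(t)$ to $\zeta_c^{s^{-1}g}(t)$ term by term, exploiting $c_m/c_{m+r}\to 1$ (inherited iteratively from $c_{n+1}/c_n\to 1$) and the fact that the tail dominates as $t\to\lambda^+$, to deduce the pointwise limit $B_s(t_n)/\zeta_c^e(t_n)\to H(s^{-1}g)$.

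The main obstacle is interchanging the infinite sum over $s$ with the limit in $n$, since no obvious $\mu^{\ast r}$-integrable dominating function is available. Fatou's lemma applied to the non-negative quantities $\mu^{\ast r}(s)B_s(t_n)/\zeta_c^e(t_n)$ immediately delivers the inequality $\sum_s\mu^{\ast r}(s)H(s^{-1}g)\le \lambda^r H(g)$. The reverse inequality is the delicate point; the plan is to exploit the monotonicity of the decreasing sequence $(c_n)$, which yields the uniform lower bound $B_s(t)\ge\zeta_c^{s^{-1}g}(t)$ for all $s$, together with the complementary uniform upper bound $B_s(t)\le(1+\varepsilon)\zeta_c^{s^{-1}g}(t)+K_\varepsilon$ coming from the slow variation. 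Combined with the independent exact identity $\sum_s\mu^{\ast r}(s)\zeta_c^{s^{-1}g}(t_n)/\zeta_c^e(t_n)\to\lambda^r H(g)$, obtained by re-indexing $k=m+r$ in the Tonelli interchange, these two-sided controls sandwich the limits and force Fatou's inequality to be sharp.
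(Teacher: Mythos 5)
Your decomposition via the convolution identity, the Tonelli interchange, the observation that $\zeta_c^e(t_n)\to\infty$ (so the head $A_r$ is negligible), the term-by-term comparison of $B_s$ with $\zeta_c^{s^{-1}g}$ using $c_m/c_{m+r}\to 1$, and the application of Fatou's lemma are all correct, and this is essentially the paper's own argument (the paper keeps the ratios $c_{n-r}/c_n$ inside the series rather than introducing $B_s$, but the content is identical). In particular you obtain rigorously the inequality
\[
\sum_{s\in G}\mu^{\ast r}(s)\,H(s^{-1}g)\;\le\;\lambda^r H(g),
\]
which is in fact the only half of the lemma used later: Corollary \ref{cor:cor_to_main_lemma} and Proposition \ref{prop:h_exists} each proceed by discarding terms from the sum over $s$, so they need only this direction.

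The reverse inequality is where your argument has a genuine gap; you have located the difficulty correctly but the proposed resolution does not work. What you have is: (i) pointwise convergence $f_n(s):=\zeta_c^{s^{-1}g}(t_n)/\zeta_c^e(t_n)\to H(s^{-1}g)$ for each $s$; (ii) convergence of the weighted sums $\sum_s \mu^{\ast r}(s) f_n(s)\to\lambda^r H(g)$; and (iii) Fatou. These three facts do not ``force Fatou's inequality to be sharp'': pointwise convergence together with convergence of the integrals is perfectly compatible with a strict inequality, because mass may escape to infinity in the $s$-variable (take counting measure on $\mathbb N$ and $f_n$ the indicator of $\{n\}$: $f_n\to 0$ pointwise while the sums are identically $1$). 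Your two-sided comparison $\zeta_c^{s^{-1}g}(t)\le B_s(t)\le(1+\varepsilon)\zeta_c^{s^{-1}g}(t)+K_\varepsilon$ only relates two families with the same potential escape of mass, so it cannot exclude this. What is actually required is a tightness or domination statement in $s$, e.g.\ a bound $\mu^{\ast r}(s)\,\zeta_c^{s^{-1}g}(t_n)/\zeta_c^e(t_n)\le D(s)$ with $\sum_s D(s)<\infty$ uniformly in $n$; this is immediate when $S_\mu$ is finite (the sum over $s$ is then finite and the interchange is trivial) but is not supplied by your argument for infinitely supported $\mu$. It is worth noting that the paper's own proof is at its tersest at exactly this point: it replaces $\zeta_c^{s^{-1}g}(t)$ by $H(s^{-1}g)\,\zeta_c^e(t)$ inside the sum over $s$ before passing to the limit, which is precisely the interchange you flagged.
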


\begin{proof}
Fix $r \in \N$ and let $\epsilon>0$. 
Since $\lim_{n \to \infty} c_{n-r}/c_n=1$, we can
choose $n_0$ such that 
$1-\epsilon \le c_{n-r}/c_n \le  1+\epsilon$,
for all $n > n_0$. 
We will also use that 
\[
\mu^{\ast n}(g)=\sum_{s\in G}\mu^{\ast r}(s)\mu^{\ast (n-r)}(s^{-1}g).
\] 
Then, for $n> n_0$,
\[
\frac{1-\epsilon}{c_{n-r}}
\sum_{s\in G}\mu^{\ast r}(s)
\mu^{\ast (n-r)}(s^{-1}g) 
\le
\frac{\mu^{\ast n}(g)}{c_n}
\le \frac{(1+\epsilon)}{c_{n-r}}
\sum_{s\in G}\mu^{\ast r}(s)\mu^{\ast (n-r)}(s^{-1}g).
\]

Setting
\[
C_1(g,t,n_0) =  \sum_{n\le n_0} \frac{\mu^{\ast n}(g)}{c_n} t^{-n},
\]
we have
\begin{align*}
t^r \sum_{n\in\mathbb{N}} \frac{\mu^{\ast n}(g)}{c_n} t^{-n} 
&\le C_1(g,t,n_0) + t^r
(1+\epsilon)
\sum_{s\in G}\mu^{\ast r}(s)
\sum_{n>n_0}t^{-n} \frac{\mu^{\ast (n-r)}(s^{-1}g)}{c_{n-r}}
\\
&\le C_1(g,t,n_0) + (1+\epsilon)\sum_{s\in G}\mu^{\ast r}(s)H(s^{-1}g)\zeta^e_c(t),
\end{align*}
using that the terms in the series are non-negative. This gives
$$
\lambda^r H(g) \le  
\lim_{m\to\infty} \frac{C_1(g,t_m,n_0)}{\zeta^e_c(t_m)} 
+ (1+\epsilon)\sum_{s\in G}\mu^{\ast r}(s)H(s^{-1}g)=(1+\epsilon)\sum_{s\in G}\mu^{\ast r}(s)H(s^{-1}g)
$$
and, since $\epsilon$ is arbitrary, 
\[
\lambda^r H(g) \le  
\sum_{s\in G}\mu^{\ast r}(s)H(s^{-1}g).
\]

For the lower bound, we have
\begin{align*}
t^{r} \sum_{n\in\mathbb{N}} \frac{\mu^{\ast n}(g)}{c_n} t^{-n} 
&\ge  
(1-\epsilon)
\sum_{s\in G}\mu^{\ast r}(s)\sum_{n>n_0}t^{-(n-r)}\frac{\mu^{\ast (n-r)}(s^{-1}g)}{c_{n-r}} 
\\
&=
(1-\epsilon)
\sum_{s\in G}\mu^{\ast r}(s)\sum_{n \in \N}t^{-n}\frac{\mu^{\ast n}(s^{-1}g)}{c_{n}}.
\end{align*}
This gives
\begin{align*}
\lambda^{r} H(g) &\ge  
\sum_{s\in G}\mu^{\ast r}(s)H(s^{-1}g). 
\end{align*}
\end{proof}

Lemma \ref{lemma:main} gives us the following corollary.

\begin{corollary}\label{cor:cor_to_main_lemma}
For any fixed $\gamma \in G$, we have
$$
0< \inf_{g\in G}\frac{H(\gamma g)}{H(g)} \le
\sup_{g\in G}\frac{H(\gamma g)}{H(g)}<\infty.
$$
\end{corollary}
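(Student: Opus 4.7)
The plan is to deduce both bounds directly from Lemma \ref{lemma:main} by exploiting the non-negativity of every term in the sum and keeping just one well-chosen summand. The non-degeneracy of $\mu$ supplies, for any target element of $G$, a power of $\mu$ that charges it.

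For the lower bound, I would apply Lemma \ref{lemma:main} at the point $\gamma g$ in place of $g$, obtaining
$$
\lambda^r H(\gamma g) = \sum_{s\in G} \mu^{\ast r}(s) H(s^{-1}\gamma g) \ge \mu^{\ast r}(\gamma) H(g),
$$
where the inequality comes from retaining only the $s=\gamma$ term. Since $S_\mu$ generates $G$ as a semigroup, there exists $r=r(\gamma)\in\N$ with $\mu^{\ast r}(\gamma)>0$. Rearranging yields
$$
\frac{H(\gamma g)}{H(g)} \ge \frac{\mu^{\ast r}(\gamma)}{\lambda^r} > 0,
$$
with the lower bound independent of $g$.

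For the upper bound, I would apply Lemma \ref{lemma:main} at $g$ itself and keep only the $s=\gamma^{-1}$ summand:
$$
\lambda^r H(g) = \sum_{s\in G} \mu^{\ast r}(s) H(s^{-1}g) \ge \mu^{\ast r}(\gamma^{-1}) H(\gamma g).
$$
Non-degeneracy again provides some $r$ with $\mu^{\ast r}(\gamma^{-1})>0$, and so
$$
\frac{H(\gamma g)}{H(g)} \le \frac{\lambda^r}{\mu^{\ast r}(\gamma^{-1})} < \infty,
$$
uniformly in $g$.

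There is essentially no obstacle. Two minor remarks: the values of $r$ used for the two bounds may differ, but since each bound is uniform in $g$ this is harmless; and one needs $\lambda=\lambda(G,\mu)>0$, which follows from non-degeneracy as well (concatenating a path to $\gamma$ with a path to $\gamma^{-1}$ produces $N$ with $\mu^{\ast N}(e)>0$, hence $\lambda\ge(\mu^{\ast N}(e))^{1/N}>0$). The corollary thus drops out as a one-line consequence of the functional equation in Lemma \ref{lemma:main}.
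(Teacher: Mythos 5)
Your proof is correct and is essentially the paper's argument: both bounds come from Lemma \ref{lemma:main} by discarding all but one non-negative summand, using non-degeneracy to ensure $\mu^{\ast r}(\gamma)>0$ (resp.\ $\mu^{\ast r}(\gamma^{-1})>0$) for suitable $r$. The only cosmetic difference is that the paper keeps a single tuple $(x_1,\dots,x_k)\in S_\mu^k$ with the right product, whereas you keep the whole term $\mu^{\ast r}(\gamma^{\pm 1})$; these are the same estimate.
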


\begin{proof}
Given $\gamma \in G$, we can find $x_1,\ldots,x_k \in S_\mu$, for some $k \ge 1$,
such that
$x_1\cdots x_k=\gamma^{-1}$. This gives us
$$ 
\mu(x_1)\cdots \mu(x_k)H(\gamma g)
\le 
\sum_{(s_1,\ldots,s_k)\in G^k} \mu(s_1)\cdots \mu(s_k) H((s_1 \cdots s_k)^{-1} g) = \lambda^k H(g),
$$
and so $\sup_{g \in G} H(\gamma g)/H(g)$ is finite.

Now we put $\gamma g$ on the right hand side and choose 
$y_1,\ldots,y_\ell \in S_\mu$ such that
$y_1 \cdots y_\ell=\gamma$ to get 
$$
\mu(y_1)\cdots \mu(y_\ell)H(g)
\le
\sum_{(s_1,\ldots,s_\ell)\in G^\ell}
\mu(s_1) \cdots \mu(s_\ell) H((s_1 \cdots s_\ell)^{-1}\gamma g) 
= \lambda^\ell H(\gamma g),
$$
and so $\inf_{g \in G} H(\gamma g)/H(g)$ is positive.
\end{proof}

We are now ready to use the amenability of $G$. 
We use the existence of a Banach mean on $\ell^\infty(G)$ to 
``average over $g$" in the last lemma. 

\begin{proposition}\label{prop:h_exists}
There is a homomorphism $h:\overline{G}\to \mathbb{R}^{>0}$ such that, for all $n \in \N$,
$$
\sum_{s\in G} \mu^{\ast n}(s)  h(-\pi(s)) \le \lambda^n,
$$
with $\lambda = \lambda(G,\mu)$.
\end{proposition}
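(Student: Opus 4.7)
The plan is to exploit the amenability of $G$ by averaging the ratios $H(\gamma\,\cdot)/H(\cdot)$ against a Banach mean $M$ on $\ell^\infty(G)$. By Corollary \ref{cor:cor_to_main_lemma}, for each $\gamma \in G$ the function
\[
\psi_\gamma(g) := \log\frac{H(\gamma g)}{H(g)}
\]
is bounded above and below, hence lies in $\ell^\infty(G)$. Define $h_0(\gamma) := M(\psi_\gamma)$ and $h(\gamma) := \exp(h_0(\gamma))$.

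The first step is to check that $h_0 : G \to \mathbb{R}$ is a homomorphism, using the cocycle identity
\[
\psi_{\gamma_1\gamma_2}(g) = \log\frac{H(\gamma_1\gamma_2 g)}{H(\gamma_2 g)} + \log\frac{H(\gamma_2 g)}{H(g)} = (\psi_{\gamma_1})_{\gamma_2}(g) + \psi_{\gamma_2}(g),
\]
combined with the left invariance $M((\psi_{\gamma_1})_{\gamma_2}) = M(\psi_{\gamma_1})$ from the definition of a Banach mean. Hence $h$ is a homomorphism from $G$ into the abelian torsion-free group $\mathbb{R}^{>0}$, so it factors through $\overline{G} = G^{\mathrm{ab}}/G^{\mathrm{ab}}_{\mathrm{T}}$, yielding the desired homomorphism $h : \overline{G} \to \mathbb{R}^{>0}$. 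Under this identification, $h(-\pi(s)) = h(\pi(s^{-1})) = \exp(M(\psi_{s^{-1}}))$.

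For the inequality, since $\exp$ is convex and $M$ is a positive linear functional with $M(\mathds{1})=1$, Jensen's inequality gives
\[
h(-\pi(s)) = \exp(M(\psi_{s^{-1}})) \le M(\exp \psi_{s^{-1}}) = M\!\left(\frac{H(s^{-1}\,\cdot)}{H(\cdot)}\right).
\]
By Lemma \ref{lemma:main}, $\sum_{s \in G} \mu^{\ast n}(s) H(s^{-1} g)/H(g) = \lambda^n$ for every $g$, so every finite partial sum $\sum_{s \in F} \mu^{\ast n}(s) H(s^{-1}\,\cdot)/H(\cdot)$ is a nonnegative element of $\ell^\infty(G)$ bounded pointwise by $\lambda^n$. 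Applying $M$ term by term, using its positivity, and then passing to the supremum over finite $F \subset G$ by monotone convergence (valid because the summands are nonnegative) yields
\[
\sum_{s \in G} \mu^{\ast n}(s) \, M\!\left(\frac{H(s^{-1}\,\cdot)}{H(\cdot)}\right) \le \lambda^n,
\]
which combined with Jensen produces the required bound.

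The main obstacle is obtaining the homomorphism property: a naive averaging $\gamma \mapsto M(H(\gamma\,\cdot)/H(\cdot))$ is bounded and invariant in the right sense, but multiplicativity in $\gamma$ fails. The passage to logarithms converts the multiplicative cocycle into an additive one that left invariance of $M$ can collapse, and Jensen's inequality is then exactly what is needed to recover a \emph{bound} after exponentiating back — with the inequality having the correct sign because $\exp$ is convex.
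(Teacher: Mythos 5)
Your proof is correct and follows essentially the same route as the paper: define $h(\gamma)=\exp M\bigl(g\mapsto \log(H(\gamma g)/H(g))\bigr)$, use the additive cocycle identity plus left invariance of $M$ for the homomorphism property, Jensen's inequality for the convexity of $\exp$, and finite partial sums (with monotonicity of $M$ and the pointwise bound $\lambda^n$ from Lemma \ref{lemma:main}) to handle the fact that $M$ is only finitely additive. No substantive differences.
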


\begin{proof}
Since any homomorphism $h : G \to \R^{>0}$ factors through $\overline G$, it suffices to show that there is a homomorphism $h : G \to \R^{>0}$ such that, for all $n \in \N$, we have
\begin{equation}\label{eq:bound_for_h}
\sum_{s\in G} \mu^{\ast n}(s)  h(s^{-1}) \le \lambda^n.
\end{equation}
Let $M$ be a Banach mean on $\ell^\infty(G)$.
Jensen's inequality says that if $\varphi$ is convex then
$$
M(\varphi( f))\ge \varphi (M(f)).
$$
(This is more familiar when the linear functional is integration with respect to a probability measure. One can check that we only need monotonicity, finite additivity, and the unit normalisation $M(1)=1$.)
We apply this to the function
$g \mapsto  (H(\gamma g)/H(g))$ in $\ell^\infty(G)$. (Note we have used
Corollary \ref{cor:cor_to_main_lemma}
to know that $g \mapsto \log (H(\gamma g)/H(g))$ is in $\ell^\infty(G)$.)
Thus we obtain
\begin{align*}
M\left( g\mapsto \frac{H(\gamma g)}{H(g)}\right) &= M\left( g\mapsto \exp \log \frac{H(\gamma g)}{H(g)}\right) 
\\
&\ge \exp M\left( g\mapsto  \log \frac{H(\gamma g)}{H(g)}\right).
\end{align*}

Now set 
\[
h(\gamma) = \exp M\left( g\mapsto \log \frac{H(\gamma g)}{H(g)}\right).
\]
We will show that $h$ satisfies (\ref{eq:bound_for_h}), recalling that $M$ is only finitely
additive.
Let $\{g_k\}_{k=1}^\infty$ be an enumeration of $G$ and, for $N \ge 1$,
let $G_N = \{g_1,\ldots,g_N\}$.
Lemma \ref{lemma:main} gives us that, for any $n \ge 1$ and any $N \ge 1$,
we have
\begin{align*}
\lambda^n &= M\left( g\mapsto \sum_{s \in G} \mu^{\ast n}(s) \frac{H(s^{-1}g)}{H(g)}\right)
\ge
M\left( g\mapsto \sum_{s \in G_N} \mu^{\ast n}(s) \frac{H(s^{-1}g)}{H(g)}\right)
\\
&= \sum_{s\in G_N} \mu^{\ast n}(s) M\left( g\mapsto \frac{H(s^{-1}g)}{H(g)}\right)
\ge \sum_{s\in G_N} \mu^{\ast n}(s) \exp M\left( g\mapsto \log \frac{H(s^{-1}g)}{H(g)}\right)
\\
&=\sum_{s\in G_N} \mu^{\ast n}(s) h(s^{-1}).
\end{align*}
Taking the supremum over $N$ gives (\ref{eq:bound_for_h}).

It remains to show that $h$ is a homomorphism. Notice that 
\begin{align*}
\log h(ab)&= M\left( g\mapsto \log \frac{H(abg)}{H(g)}\right) 
\\
&=  M\left( g\mapsto \log \frac{H(abg)}{H(bg)}\right)+  M\left( g\mapsto \log \frac{H(bg)}{H(g)}\right) 
\\
&= \log h(a) + \log h(b)
\end{align*}
and
\begin{align*}
\log h(\gamma^{-1}) &= M\left(g \mapsto \log \frac{H(\gamma^{-1}g)}{H(g)}\right)
=M\left(g \mapsto \log \frac{H(g)}{H(\gamma g)}\right)
\\
&= M\left(g \mapsto -\log \frac{H(\gamma g)}{H(g)}\right) = -\log h(\gamma).
\end{align*}
using translation invariance of $M$. The conclusion follows.
\end{proof}

Combining Proposition \ref{prop:conditional} and Proposition \ref{prop:h_exists}
shows that if $G$ is amenable then $\lambda(\overline{G},\bar{\mu}) \le \lambda(G,\mu)$.

\section{Equidistribution and proof of the ratio limit theorem}\label{section:equidistribution}

In this section we use Theorem \ref{thm:kesten} to 
prove a ratio limit theorem for amenable groups, Theorem \ref{thm:ratio}.
Our arguments will also give a new proof of 
Proposition \ref{thm:slowincrease} in this setting.
Our approach is based on weighted equidistribution results for countable state shift spaces.
Suppose that $G$ is amenable, that $\mu$ is non-degenerate
 and that 
$(G,\mu)$ is aperiodic.
We let $\lambda$ denote the common value
\[
\lambda = \phi_\mu(\xi) = \lambda(\overline{G},\bar{\mu}) = \lambda(G,\mu),
\]
 given by Theorem \ref{thm:kesten}. As above, $\mu_\xi(g)=\lambda^{-1}h(g) \mu(g)$,
 where $h(g)=e^{\langle \xi,\pi(g)\rangle}$.

 We consider the sequence space $\Sigma = S_\mu^{\mathbb{N}}$ 
 and let $\sigma : 
\Sigma \to \Sigma$ be the shift map: $\sigma((s_i)_{i=1}^\infty)=(s_{i+1})_{i=1}^\infty$.
 If $s = (s_1,\ldots,s_n) \in S_\mu^n$, we write $[s]= [s_1,\dots, s_n]$ for the cylinder set
 \[
 [s_1,\dots, s_n] :=\{(x_i)_{i=1}^\infty \in \Sigma \hbox{ : } x_i =s_i \ \forall i=1,\ldots,n\}.
 \]
 We give $\Sigma$ the topology generated by cylinder sets (which are both open and closed).
 

 We denote by $\nu_\xi$ the Bernoulli measure on $\Sigma$ given by 
$$
\nu_\xi([s_1,\ldots, s_n])= \mu_\xi(s_1) \cdots  \mu_\xi(s_n).
$$

Let 
\[
\Lambda_n = \{s=(s_1,\ldots,s_n) \in S_\mu^n \hbox{ : } s_1 \cdots s_n =e\}
\]
and
define a sequence of probability measures $m_n$ on $\Sigma$
 by
\begin{align*}
m_n &:= \frac{1}{\mu_\xi^{\ast n}(e)}
\sum_{s=(s_1,\ldots, s_n) \in \Lambda_n} 
\mu_\xi(s_1)\cdots \mu_\xi(s_n) 
\delta_{s_\infty}
=
\frac{1}{\mu_\xi^{\ast n}(e)}
\sum_{s=(s_1,\ldots, s_n) \in \Lambda_n} 
\nu_\xi([s]) 
\delta_{s_\infty},
\end{align*}
where we use the notation $s_\infty \in \Sigma$ to mean the one-sided infinite concatenation of 
$s=(s_1, \ldots, s_n)$ and $\delta_{s_\infty}$ denotes the Dirac measure at this point.
We remark that we also have
\[
m_n = \frac{1}{\mu^{\ast n}(e)}
\sum_{s=(s_1,\ldots, s_n) \in \Lambda_n}
\mu(s_1)\cdots \mu(s_n) 
\delta_{s_\infty}
\]
but we do not use this formula.
We will need to explicitly evaluate the measures $m_n$ on cylinder sets.

\begin{lemma}\label{lem:explicit_natural}
For a cylinder set $[u_1,\ldots,u_k]$ we have that, for $n>k$,
\[
m_n([u_1,\ldots, u_k]) =
\frac{\mu_\xi(u_{1})\cdots \mu_\xi(u_k)}{\mu_\xi^{\ast n}(e)}\mu_\xi^{\ast (n-k)}(u^{-1}),
\]
where $u = u_1 \cdots u_k$.
\end{lemma}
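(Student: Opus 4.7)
The proof is a direct unpacking of the definition of $m_n$, so I do not expect any serious obstacle --- the only care needed is in identifying which atoms of $m_n$ fall into the cylinder $[u_1,\ldots,u_k]$ and then isolating the contribution of the fixed initial block from the combinatorial tail.

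First I would rewrite
\[
m_n([u_1,\ldots,u_k]) = \frac{1}{\mu_\xi^{\ast n}(e)} \sum_{\substack{s = (s_1,\ldots,s_n)\in\Lambda_n \\ s_\infty \in [u_1,\ldots,u_k]}} \mu_\xi(s_1)\cdots\mu_\xi(s_n),
\]
and observe that since $n>k$ and $s_\infty$ is the one-sided periodic concatenation of $(s_1,\ldots,s_n)$, its first $k$ coordinates are just $s_1,\ldots,s_k$. Hence the condition $s_\infty \in [u_1,\ldots,u_k]$ is equivalent to $s_i = u_i$ for $i = 1,\ldots,k$.

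Next I would pull the factors $\mu_\xi(u_1)\cdots\mu_\xi(u_k)$ out of the sum. The remaining constraint from $\Lambda_n$ is $s_1 \cdots s_n = e$, which, once we fix $s_i = u_i$ for $i \le k$ and set $u = u_1\cdots u_k$, becomes $s_{k+1}\cdots s_n = u^{-1}$. Therefore
\[
m_n([u_1,\ldots,u_k]) = \frac{\mu_\xi(u_1)\cdots \mu_\xi(u_k)}{\mu_\xi^{\ast n}(e)} \sum_{\substack{(s_{k+1},\ldots,s_n)\in S_\mu^{n-k} \\ s_{k+1}\cdots s_n = u^{-1}}} \mu_\xi(s_{k+1})\cdots \mu_\xi(s_n).
\]

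Finally, I would recognise the remaining sum as precisely $\mu_\xi^{\ast(n-k)}(u^{-1})$, by the definition of convolution restricted to the support $S_\mu$ (which, being the support of $\mu_\xi$ as well, introduces no loss). Combining these steps yields the claimed identity. The only point worth double-checking is the trivial one that the sum over $(s_{k+1},\ldots,s_n) \in G^{n-k}$ with $s_i \notin S_\mu$ for some $i$ contributes nothing, since $\mu_\xi$ vanishes off $S_\mu$; this justifies freely writing the convolution without any support restriction.
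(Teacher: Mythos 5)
Your proposal is correct and follows essentially the same computation as the paper: identify the membership condition $s_\infty \in [u_1,\ldots,u_k]$ with $s_i = u_i$ for $i \le k$ (valid since $n > k$), factor out $\mu_\xi(u_1)\cdots\mu_\xi(u_k)$, and recognise the remaining sum over words with product $u^{-1}$ as $\mu_\xi^{\ast(n-k)}(u^{-1})$. No issues.
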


\begin{proof}
This is a straightforward calculation. 
For $n >k$,
\begin{align*}
m_n([u_1,\ldots, u_k]) &= \frac{1}{\mu_\xi^{\ast n}(e)}
\sum_{(s_1,\ldots, s_n) \in \Lambda_n} 
\mu_\xi(s_1)\cdots \mu_\xi(s_n) \, \delta_{(s_1, \ldots, s_n)_\infty}([u_1,\ldots, u_k])
\\
&=
 \frac{1}{\mu_\xi^{\ast n}(e)}
 \sum_{\substack{(s_1,\ldots, s_n) \in \Lambda_n \\ 
 s_1=u_1,\cdots , s_k = u_k}} 
 \mu_\xi(s_1)\cdots \mu_\xi(s_n)
\\
&=
 \frac{\mu_\xi(u_{1})\cdots \mu_\xi(u_k)}{\mu_\xi^{\ast n}(e)}
 \sum_{\substack{(s_{k+1},\ldots, s_n) \in S_\mu^{n-k} 
 \\ s_{k+1}\cdots s_n=u^{-1}}} \mu_\xi(s_{k+1})\cdots \mu_\xi(s_n)
\\
&=
 \frac{\mu_\xi(u_{1})\cdots \mu_\xi(u_k)}{\mu_\xi^{\ast n}(e)}\mu_\xi^{\ast (n-k)}(u^{-1}).
\end{align*}
\end{proof}

We will show that, for each cylinder set $[u_1,\ldots,u_k]$. 
$m_n([u_1,\ldots,u_k])$ converges to $\nu_\xi([u_1,\ldots,u_k])$, as $n \to \infty$. 
In order to have \emph{convergence} (as opposed to an accumulation point) we need 
$\mu_\xi^{\ast n}(e)^{1/n}$ to have a limit. This is a consequence of aperiodicity,
as the next lemma shows.

\begin{lemma}\label{lemma:limit}
We have
\[
\lim_{n \to \infty} (\mu_\xi^{\ast n}(e)^{1/n}) =1.
\]
\end{lemma}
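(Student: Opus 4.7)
The plan is to reduce the statement to a subadditivity/Fekete argument, using Theorem \ref{thm:kesten} (and hence the amenability of $G$) to identify $\phi_\mu(\xi)$ with $\lambda(G,\mu)$. From the definition of $\mu_\xi$ and the fact that $\pi(e) = 0$, a straightforward induction gives
\[
\mu_\xi^{\ast n}(g) = \phi_\mu(\xi)^{-n} e^{\langle \xi,\pi(g)\rangle}\mu^{\ast n}(g),
\]
so in particular $\mu_\xi^{\ast n}(e) = \phi_\mu(\xi)^{-n}\mu^{\ast n}(e)$. Since $G$ is amenable, Theorem \ref{thm:kesten} together with Corollary \ref{cor:xi_measure} identifies $\phi_\mu(\xi) = \lambda(\overline G,\bar\mu) = \lambda(G,\mu) = \lambda$, and therefore
\[
(\mu_\xi^{\ast n}(e))^{1/n} = \lambda^{-1}(\mu^{\ast n}(e))^{1/n}.
\]
It thus suffices to show that $(\mu^{\ast n}(e))^{1/n}$ actually \emph{converges} to $\lambda$, rather than merely having $\lambda$ as its $\limsup$.

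For convergence, I would appeal to Fekete's lemma applied to $a_n := -\log \mu^{\ast n}(e)$. The key identity is
\[
\mu^{\ast(n+m)}(e) = \sum_{s \in G} \mu^{\ast n}(s)\mu^{\ast m}(s^{-1}) \ge \mu^{\ast n}(e)\mu^{\ast m}(e),
\]
which gives the subadditive inequality $a_{n+m} \le a_n + a_m$. Aperiodicity of $(G,\mu)$ guarantees that $\mu^{\ast n}(e) > 0$ for all $n \ge n_0$, so $a_n$ is a well-defined real-valued subadditive sequence on $\{n : n \ge n_0\}$. Fekete's lemma (in its version for sequences subadditive from some index onward) then yields that $\lim_{n\to\infty} a_n/n$ exists; equivalently $\lim_{n\to\infty} (\mu^{\ast n}(e))^{1/n}$ exists. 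This limit must coincide with the $\limsup$, which by definition is $\lambda(G,\mu) = \lambda$. Combining the two displayed identities gives $\lim_n (\mu_\xi^{\ast n}(e))^{1/n} = \lambda^{-1}\cdot\lambda = 1$, as required.

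I do not foresee a substantive obstacle here: the lemma is essentially a repackaging of Fekete, with the only non-trivial input being the equality $\phi_\mu(\xi) = \lambda(G,\mu)$, for which the hard work has already been done in Sections \ref{section:toamenable} and \ref{section:amenableto}. The only small point of care is ensuring that the subadditivity can indeed be exploited despite $\mu^{\ast n}(e)$ possibly vanishing for small $n$ --- this is precisely what aperiodicity is there to rule out.
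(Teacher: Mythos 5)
Your proposal is correct and follows essentially the same route as the paper: identify $\limsup_n(\mu_\xi^{\ast n}(e))^{1/n}=1$ via the identity $\mu_\xi^{\ast n}(e)=\phi_\mu(\xi)^{-n}\mu^{\ast n}(e)$ and Theorem \ref{thm:kesten}, then upgrade the $\limsup$ to a limit using supermultiplicativity of $\mu^{\ast n}(e)$, aperiodicity, and Fekete's lemma. The only cosmetic difference is that you run Fekete on $\mu^{\ast n}(e)$ while the paper phrases it for $\mu_\xi^{\ast n}(e)$; these differ by the exact factor $\phi_\mu(\xi)^{-n}$, so the arguments are interchangeable.
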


\begin{proof}
We know that $\limsup_{n \to \infty} (\mu_\xi^{\ast n}(e))^{1/n}=1$.
Since $\mu$ is aperiodic, we have $\mu^{\ast n}(e)>0$ for all sufficiently large $n$.
Recall also that $\mu_\xi^{\ast (n+m)}(e)\ge \mu_\xi^{\ast n}(e)\mu_\xi^{\ast m}(e)$. 
This tells us that $-\log \mu_\xi^{\ast n}(e)$ is sub-additive. Hence by Fekete's lemma
$$
\lim_{n\to\infty} \frac{-\log \mu^{\ast n}(e)}{n} = \inf_{n \ge 1} \frac{-\log \mu^{\ast n}(e)}{n},
$$
in particular the limit exists and is equal to the limsup.
\end{proof}

In order to show the required convergence for the $m_n$, 
we introduce some ideas and terminology from thermodynamic formalism
and large deviation theory. 
A function $\varphi : \Sigma \to \R$ is called locally H\"older continuous if
\begin{equation}\label{eq:loc_Holder}
\sup_{s \in S_\mu^n} \sup_{x,y \in [s]} |\varphi(x)-\varphi(y)| \le C\theta^n,
\end{equation}
for some $C>0$ and $0<\theta<1$, for all $n \ge 1$.
For a locally H\"older continuous function
$\varphi : \Sigma \to \R$, we define the \emph{Gurevi\v{c} pressure} 
$P_{\mathrm{G}}(\varphi)$ by
\begin{equation*}\label{eq:def_of_Gpressure}
P_{\mathrm{G}}(\varphi) = \lim_{n \to \infty} \frac{1}{n} \log
\sum_{s \in S_\mu^n} \exp \sum_{j=0}^{n-1} \varphi(s_\infty) \in \R \cup \{+\infty\}.
\end{equation*}
(The original definition given by Sarig in \cite{Sarig-etds1999} is somewhat different,
and only requires, (\ref{eq:loc_Holder}) to hold for $n \ge 2$, but, by 
Corollary 1 of \cite{Sarig-pams2003}, the above formula gives the Gurevi\v{c} pressure 
in our setting.)
We now fix
\[
\varphi((s_i)_{i=1}^\infty) := \log \mu_\xi(s_1) = \log \nu([s_1]),
\]
so that, in particular, $P_{\mathrm{G}}(\varphi)=0$. Let $\chi$ be the indicator function of some cylinder. We can easily calculate from the definition that, for $t \in \R$, $P_{\mathrm{G}}(\varphi+t\chi)
\le \max\{0,|t|\}$ for all $t \in \R$. Hence, by Corollary 4 of \cite{Sarig-pams2003}, 
$t \mapsto P_{\mathrm{G}}(\varphi+t\chi)$ is real analytic for $t \in \R$ and, 
by Theorems 6.12 and 6.5 of \cite{Sarig-PSPM2015}, 
\begin{equation}\label{eq:deriv_of_pressure}
\frac{dP(\varphi+t\chi)}{dt}\Bigg|_{t=0} = \int \chi \, d\nu_\xi.
\end{equation}
(The same discussion remains true if $\chi$ is replaced with any \emph{bounded} locally H\"older
function but indicator functions of cylinders are sufficient for our purposes.)

For $s \in S_\nu^n$, let $\tau_{s,n}$ denote the orbital measure
\[
\tau_{s,n} := \frac{1}{n}\sum_{j=0}^{n-1} \delta_{\sigma^j(s_\infty)}.
\]
Following, Theorem 7.4 of \cite{Sarig-PSPM2015}, we have the following large deviations bound.

\begin{proposition}\label{prop:local_ld_bound}
Given $\epsilon>0$, there exists
$C>0$ and $\eta(\epsilon)>0$ such that
\begin{equation*}\label{eq:local_ld_bound}
\sum_{\substack{s \in S_\mu^n \\ \left|\int \chi \, d\tau_{s,n} - \int \chi d\nu_\xi\right| > \epsilon}}
\nu_\xi([s]) \le Ce^{-\eta(\epsilon)n}.
\end{equation*}
\end{proposition}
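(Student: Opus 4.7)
The plan is to prove Proposition~\ref{prop:local_ld_bound} via the standard exponential Chebyshev (Cram\'er) argument, combined with the analyticity and derivative information for the map $t \mapsto P_{\mathrm{G}}(\varphi + t\chi)$ recalled just before (\ref{eq:deriv_of_pressure}). Write $\bar{\chi} := \int \chi \, d\nu_\xi$ and split the left-hand side of the desired bound into the upper-tail contribution $\{s : \int \chi \, d\tau_{s,n} > \bar{\chi} + \epsilon\}$ and the lower-tail contribution $\{s : \int \chi \, d\tau_{s,n} < \bar{\chi} - \epsilon\}$. The two are treated symmetrically, so I focus on the upper tail.

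Since $\varphi$ depends only on the first coordinate, one has $\nu_\xi([s]) = \exp \sum_{j=0}^{n-1} \varphi(\sigma^j s_\infty)$, while $\int \chi \, d\tau_{s,n} = \frac{1}{n} \sum_{j=0}^{n-1} \chi(\sigma^j s_\infty)$. For any $t > 0$, Markov's inequality applied to $\exp(t \sum_j \chi(\sigma^j s_\infty))$ then gives
\[
\sum_{\substack{s \in S_\mu^n \\ \int \chi d\tau_{s,n} > \bar{\chi} + \epsilon}} \nu_\xi([s])
\le e^{-tn(\bar{\chi} + \epsilon)} \sum_{s \in S_\mu^n} \exp \sum_{j=0}^{n-1} (\varphi + t\chi)(\sigma^j s_\infty).
\]
By the definition of the Gurevi\v{c} pressure, the right-hand sum equals $e^{n P_{\mathrm{G}}(\varphi + t\chi) + o(n)}$, and the finite-range dependence of $\chi$ makes the partition sums sub-multiplicative up to a bounded factor, so one in fact has $\sum_{s \in S_\mu^n} \exp \sum_j (\varphi + t\chi)(\sigma^j s_\infty) \le C_1 e^{n P_{\mathrm{G}}(\varphi + t\chi)}$ for a constant $C_1$ independent of $n$.

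Now introduce $F(t) := P_{\mathrm{G}}(\varphi + t\chi) - t(\bar{\chi} + \epsilon)$. The normalisation $P_{\mathrm{G}}(\varphi) = 0$ gives $F(0) = 0$, and (\ref{eq:deriv_of_pressure}) gives $F'(0) = \bar{\chi} - (\bar{\chi} + \epsilon) = -\epsilon < 0$. Real-analyticity of $F$ on $\R$ (via Corollary~4 of \cite{Sarig-pams2003}) allows one to fix $t_0 = t_0(\epsilon) > 0$ small enough that $F(t_0) \le -\eta_+(\epsilon)$ for some $\eta_+(\epsilon) > 0$. Inserting this $t_0$ into the Markov bound delivers the desired exponential decay $C_1 e^{-\eta_+(\epsilon) n}$ for the upper tail. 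The analogous argument with $t < 0$ yields an $\eta_-(\epsilon) > 0$ for the lower tail; taking $\eta(\epsilon) := \min\{\eta_+(\epsilon), \eta_-(\epsilon)\}$ and $C = 2 C_1$ closes the proof.

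The only mildly technical point, and the one I expect to be the main obstacle, is promoting the asymptotic identity defining $P_{\mathrm{G}}$ into the uniform-in-$n$ inequality $\sum_{s \in S_\mu^n} \exp \sum_j (\varphi + t\chi)(\sigma^j s_\infty) \le C_1 e^{n P_{\mathrm{G}}(\varphi + t\chi)}$. In the Bernoulli setting here this is straightforward because $\chi$ depends on finitely many coordinates, so a Fekete-type argument (combined with the fact that the relevant partition sums behave sub-multiplicatively up to bounded boundary corrections) yields the required uniform constant. Alternatively, the statement is exactly the specialisation to our countable Bernoulli shift of Theorem~7.4 of \cite{Sarig-PSPM2015}, and one can simply invoke that result directly.
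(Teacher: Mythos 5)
Your proof is correct and follows essentially the same route as the paper: the same split into upper and lower tails, the same exponential Chebyshev bound against the partition sum for $\varphi+t\chi$, and the same use of $P_{\mathrm{G}}(\varphi)=0$ together with (\ref{eq:deriv_of_pressure}) to make the exponent negative for small $|t|$. The uniform-in-$n$ constant you worry about at the end is not actually needed: the $\limsup$ bound already gives the exponential decay for all large $n$, and the sum is trivially at most $1$ for the remaining finitely many $n$, which is how the paper closes the argument.
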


\begin{proof}
The proof is standard but we include it for completeness.
We consider $s \in S_\mu^n$ such that
$\int \chi \, d\tau_{s,n} > \int \chi \, d\nu_\xi +\epsilon$
and $\int \chi \, d\tau_{s,n} < \int \chi \, d\nu_\xi -\epsilon$ separately.
For $t>0$, we have
\begin{align*}
\sum_{\substack{s \in S_\mu^n \\ \int \chi \, d\tau_{s,n} > \int \chi d\nu_\xi + \epsilon}}
\nu_\xi([s])
&\le
\sum_{s \in S_\mu^n} e^{\varphi^n(s_\infty) +t\chi^n(s_\infty) -nt\int \chi \, d\nu_\xi -nt\epsilon},
\end{align*}
so that, 
\begin{align*}
\limsup_{n \to \infty} 
\frac{1}{n} \log
\sum_{\substack{s \in S_\mu^n \\ \int \chi \, d\tau_{s,n} > \int \chi d\nu_\xi + \epsilon}}
\nu_\xi([s])
\le P_{\mathrm{G}}(\varphi + t\chi) - t\int \chi \, d\nu_\xi -t\epsilon.
\end{align*}
Using $P_{\mathrm{G}}(\varphi)=0$ and  (\ref{eq:deriv_of_pressure}), we see that, for sufficiently small $t>0$, we have
\[
P_{\mathrm{G}}(\varphi + t\chi) - t\int \chi \, d\nu_\xi -t\epsilon <0.
\]
Similarly, for $t<0$, 
\begin{align*}
\limsup_{n \to \infty} 
\frac{1}{n} \log
\sum_{\substack{s \in S_\mu^n \\ \int \chi \, d\tau_{s,n} < \int \chi d\nu_\xi - \epsilon}}
\nu_\xi([s])
\le P_{\mathrm{G}}(\varphi + t\chi) - t\int \chi \, d\nu_\xi +t\epsilon.
\end{align*}
and, for sufficiently small $t<0$, this upper bound is negative.
Combing these estimates gives the result. 
\end{proof}

Since $\Lambda_n \subset S_\mu^n$, we have the following immediate
corollary.

\begin{corollary}\label{cor:lambda_version_of_ld}
For $\epsilon>0$, we have
\[ 
\sum_{\substack{s \in \Lambda_n \\ \left|\int \chi \, d\tau_{s,n} - \int \chi d\nu_\xi\right| > \epsilon}}
\nu_\xi([s])
\le Ce^{-\eta(\epsilon)n}.
\]
\end{corollary}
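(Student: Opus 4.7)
The plan is to observe that this corollary is an \emph{immediate} consequence of Proposition \ref{prop:local_ld_bound}, obtained purely by restricting the index set of the sum. By definition $\Lambda_n = \{s \in S_\mu^n : s_1 \cdots s_n = e\}$ is a subset of $S_\mu^n$, and every weight $\nu_\xi([s])$ is nonnegative, so passing from the sum over $s \in S_\mu^n$ satisfying the deviation condition $|\int \chi \, d\tau_{s,n} - \int \chi \, d\nu_\xi| > \epsilon$ to the sum over the subset $s \in \Lambda_n$ satisfying the same condition can only decrease the total. Applying Proposition \ref{prop:local_ld_bound} with the same $\chi$ and $\epsilon$ then gives the bound $C e^{-\eta(\epsilon) n}$, and the constants $C$ and $\eta(\epsilon)$ are inherited verbatim from the proposition.

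There is genuinely no obstacle to overcome at this step: the real analytic work (the pressure computation, its differentiability and the Chebyshev/Markov exponential inequality splitting the event $\int \chi \, d\tau_{s,n} > \int \chi \, d\nu_\xi + \epsilon$ from $\int \chi \, d\tau_{s,n} < \int \chi \, d\nu_\xi - \epsilon$) is already done in Proposition \ref{prop:local_ld_bound}. The only point worth verifying before writing the proof is that the deviation event appearing in the statement of the corollary matches exactly the event bounded in the proposition, so that no additional decomposition is needed; this is evident from a direct comparison of the two displays. The reason the corollary is worth stating separately is that in the application to the measures $m_n$, we only ever sum over $\Lambda_n$ (the set of loops at $e$), and this restricted bound is the form that feeds into the subsequent convergence argument for $m_n([u_1,\ldots,u_k])$.
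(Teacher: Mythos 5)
Your proof is correct and is exactly the paper's argument: the paper dispatches this corollary with the single remark that $\Lambda_n \subset S_\mu^n$, which is precisely your observation that restricting a sum of nonnegative terms to a subset can only decrease it, so the bound of Proposition \ref{prop:local_ld_bound} carries over verbatim.
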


The following equidistribution result is now an easy consequence. 

\begin{proposition}\label{prop:equidistribution}
For any cylinder set
$[u_1,\ldots, u_k]$, we have that
\begin{equation*}\label{eqn:limit_for_cylinder}
\lim_{n \to \infty} m_n([u_1,\ldots, u_k]) =\mu_\xi(u_1)\cdots \mu_\xi(u_k).
\end{equation*}
\end{proposition}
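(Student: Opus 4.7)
The strategy is to exploit the shift-invariance of $m_n$ in order to recast $m_n([u_1,\ldots,u_k])$ as a $\nu_\xi$-weighted average of the functionals $s \mapsto \int \chi \, d\tau_{s,n}$, where $\chi = \mathbf{1}_{[u_1,\ldots,u_k]}$, and then to apply the large deviations estimate from Corollary \ref{cor:lambda_version_of_ld} together with the sub-exponential decay of $\mu_\xi^{\ast n}(e)$ given by Lemma \ref{lemma:limit}.

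The first step is to verify that $\sigma_* m_n = m_n$. The cyclic shift $c : (s_1,\ldots,s_n) \mapsto (s_2,\ldots,s_n,s_1)$ preserves $\Lambda_n$, since $s_1\cdots s_n = e$ gives $s_2\cdots s_n s_1 = s_1^{-1}(s_1\cdots s_n)s_1 = e$; it preserves the weight $\nu_\xi([s]) = \mu_\xi(s_1)\cdots \mu_\xi(s_n)$ by commutativity of scalar multiplication; and the induced map on the associated periodic points agrees with one application of $\sigma$, i.e. $\sigma(s_\infty) = c(s)_\infty$. Relabelling the sum that defines $m_n$ via $t = c(s)$ then yields $\sigma_* m_n = m_n$. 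Averaging over $j = 0, 1, \ldots, n-1$ therefore gives the orbital-measure representation
\[
m_n([u_1,\ldots,u_k]) = \frac{1}{\mu_\xi^{\ast n}(e)} \sum_{s \in \Lambda_n} \nu_\xi([s]) \int \chi\, d\tau_{s,n}.
\]

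Next, fix $\epsilon > 0$ and split $\Lambda_n = G_n \sqcup B_n$, where $B_n$ consists of those $s$ with $\bigl|\int \chi\,d\tau_{s,n} - \nu_\xi([u_1,\ldots,u_k])\bigr| > \epsilon$. Since $0 \le \chi \le 1$, the contribution of $B_n$ to the displayed sum is at most $\sum_{s \in B_n} \nu_\xi([s])/\mu_\xi^{\ast n}(e)$, which by Corollary \ref{cor:lambda_version_of_ld} is bounded by $Ce^{-\eta(\epsilon)n}/\mu_\xi^{\ast n}(e)$. Lemma \ref{lemma:limit} gives $\mu_\xi^{\ast n}(e)^{1/n} \to 1$, so this ratio tends to $0$. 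The contribution of $G_n$ lies within $\epsilon$ of $\nu_\xi([u_1,\ldots,u_k]) \cdot \bigl(\sum_{s \in G_n} \nu_\xi([s])/\mu_\xi^{\ast n}(e)\bigr)$, and the normalising ratio tends to $1$ by the same sub-exponential argument. Hence $\limsup_n |m_n([u_1,\ldots,u_k]) - \nu_\xi([u_1,\ldots,u_k])| \le \epsilon$, and letting $\epsilon \to 0$ completes the proof.

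The main technical step is the shift-invariance observation that converts the single-point-mass sum over $\Lambda_n$ into the orbital-average form required by the large deviations estimate. Without it, one is stuck trying to estimate $\mu_\xi^{\ast (n-k)}(u^{-1})/\mu_\xi^{\ast n}(e) \to 1$ directly via Lemma \ref{lem:explicit_natural}, which is essentially equivalent to the ratio limit theorem we are trying to prove. Once shift-invariance is in hand, the remainder is a routine split into good and bad words combined with the sub-exponential decay of the denominator.
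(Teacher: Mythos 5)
Your proof is correct and follows essentially the same route as the paper: rewrite $m_n([u_1,\ldots,u_k])$ as the $\nu_\xi$-weighted average of $\int \chi \, d\tau_{s,n}$ over $s \in \Lambda_n$, then split into good and bad words using Corollary \ref{cor:lambda_version_of_ld} and the sub-exponential lower bound on $\mu_\xi^{\ast n}(e)$ from Lemma \ref{lemma:limit}. Your explicit verification of the cyclic-shift invariance that converts the point masses $\chi(s_\infty)$ into the orbital averages $\int \chi \, d\tau_{s,n}$ is in fact a step the paper's displayed computation uses silently, so your write-up is, if anything, slightly more complete.
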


\begin{proof}
Let $\chi : \Sigma \to \R$ be the indicator function of $[u_1,\ldots,u_k]$, then we need to show
\[
\lim_{n \to \infty} \int \chi \, dm_n = \int \chi \, d\nu_\xi.
\]
We have
\begin{align*}
\int \chi \, dm_n - \int \chi \, d\nu_\xi 
&= \frac{1}{\mu_\xi^{\ast n}(e)}
\sum_{\substack{s \in \Lambda_n \\ \left|\int \chi \, d\tau_{s,n} - \int \chi d\nu_\xi\right| > \epsilon}} 
\nu_\xi([s])
\int \chi \, d\tau_{s,n} 
\\
&+  \frac{1}{\mu_\xi^{\ast n}(e)}\sum_{\substack{s \in \Lambda_n \\ \left|\int \chi \, d\tau_{s,n} - \int \chi d\nu_\xi\right| \le \epsilon}}  
\nu_\xi([s])
\int \chi \, d\tau_{s,n}-\int \chi \, d\nu_\xi.
\end{align*}
By Lemma \ref{lemma:limit} and Corollary \ref{cor:lambda_version_of_ld}, the first term on the right hand side tends to zero
exponentially fast. Also,
\begin{align*}
&\left|
\frac{1}{\mu_\xi^{\ast n}(e)}
\sum_{\substack{s \in \Lambda_n \\ \left|\int \chi \, d\tau_{s,n} - \int \chi d\nu_\xi\right| \le \epsilon}} 
\nu_\xi([s])
\int \chi \, d\tau_{s,n}-\int \chi\, d\nu_\xi
\right|
\\
&\le
\epsilon + 
\frac{\left|\int \chi \, d\nu_\xi\right|}{\mu_\xi^{\ast n}(e)} 
\sum_{\substack{s \in \Lambda_n \\ \left|\int \chi \, d\tau_{s,n} - \int \chi d\nu_\xi\right| > \epsilon}} 
\nu_\xi([s])
\le \epsilon + Ce^{-\eta(\epsilon)n},
\end{align*}
which, since $\epsilon$ is arbitrary, gives the result.
\end{proof}

Combining Proposition \ref{prop:equidistribution} with Lemma \ref{lem:explicit_natural}, we see that for $u=u_1\cdots u_k$ we have
\begin{equation}\label{eqn:steady_for_muh}
\lim_{n \to \infty} \frac{\mu_\xi^{\ast (n-k)}(u^{-1})}{\mu_\xi^{\ast n}(e)}
=1.
\end{equation}

Most of the work is done. We prove Proposition \ref{thm:slowincrease} for amenable groups.

\begin{proof}[Proof of Proposition \ref{thm:slowincrease}]
It suffices to show that
\[
\lim_{n \to \infty} \frac{\mu_\xi^{\ast (n+1)}(e)}{\mu_\xi^{\ast n}(e)} = 1.
\]
If we choose $(u_1,\ldots, u_k) \in S_\mu^k$ with $u_1\cdots u_k=e$, then equation (\ref{eqn:steady_for_muh}) gives that
$
\lim_{n \to \infty} \mu_\xi^{\ast (n-k)}(e)/\mu_\xi^{\ast n}(e)=  1$,
so that $\lim_{n \to \infty} \mu_\xi^{\ast (n+k)}(e)/\mu_\xi^{\ast n}(e)=1$.
We then write
\[
\frac{\mu_\xi^{\ast (n+k)}(e)}{\mu^{\ast n}(e)}
= \frac{\mu_\xi^{\ast (n+k)}(e)}{\mu_\xi^{\ast (n+k-1)}(e)}
\cdots
\frac{\mu_\xi^{\ast (n+1)}(e)}{\mu_\xi^{\ast n}(e)}
\]
and deduce that $\lim_{n \to \infty} \mu_\xi^{\ast (n+1)}(e)/\mu_\xi^{\ast n}(e)=1$, as required.
\end{proof}

We can now establish the ratio limit theorem for amenable groups.

\begin{proof}[Proof of Theorem \ref{thm:ratio}]
Let $g \in G$ be arbitrary. Choosing $(u_1,\ldots, u_k)$ with $u_1\cdots u_k=g^{-1}$
and applying (\ref{eqn:steady_for_muh})
 gives that
 $\lim_{n \to \infty} \mu_\xi^{\ast (n-k)}(g)/\mu_\xi^{\ast n}(e) = 1$ and hence that
$$
\lim_{n \to \infty} \frac{\mu^{\ast (n-k)}(g)}{\mu^{\ast n}(e)} = \frac{\lambda^{-k}}{h(g)}.
$$
Now, applying Proposition \ref{thm:slowincrease},
$$
\frac{\mu^{\ast n}(g)}{\mu^{\ast n}(e)} = \frac{\mu^{\ast n}(g)}{\mu^{\ast (n+k)}(e)}
\frac{\mu^{\ast (n+k)}(e)}{\mu^{\ast n}(e)} \to e^{-\langle \xi,\pi(g)\rangle},
$$
as $n\to\infty$.
\end{proof}

\end{document}